\documentclass[11pt]{article}
\usepackage[a4paper,margin=28mm]{geometry}
\usepackage{amsmath,amssymb,amsthm,aliascnt,mathtools}
\usepackage{xcolor}
\usepackage{etoolbox}
\usepackage{hyperref}
\hypersetup{colorlinks=true,linkcolor=cyan,citecolor=red,urlcolor=black,filecolor=black}
\numberwithin{equation}{section}
\newtheorem{theorem}{Theorem}[section]
\newaliascnt{lemma}{theorem}
\newtheorem{lemma}[lemma]{Lemma}
\aliascntresetthe{lemma}

\newaliascnt{proposition}{theorem}
\newtheorem{proposition}[proposition]{Proposition}
\aliascntresetthe{proposition}

\newaliascnt{corollary}{theorem}
\newtheorem{corollary}[corollary]{Corollary}
\aliascntresetthe{corollary}

\DeclareMathOperator{\GF}{GF}
\DeclareMathOperator{\cl}{cl}
\DeclareMathOperator{\spanof}{span}
\newcommand{\BB}{\mathcal B}
\newcommand{\CC}{\mathcal C}
\newcommand{\FF}{\GF(2)}
\newcommand{\wloc}[1]{w_{#1}(M;B)}
\newcommand{\dloc}[1]{d_{#1}(M;B)}
\newcommand{\ind}{\mathbf 1}

\makeatletter
\patchcmd{\@afterheading}
  {\clubpenalty \@M}
  {\clubpenalty \@clubpenalty}
  {}
  {\PackageError{Manuscriot-6}{Unable to relax heading pagination}{}}
\makeatother

\begin{document}
\begin{center}
{\Large\bfseries A rank bound for bases and circuits in binary matroids}
\end{center}
\begin{center}
Houshan Fu\par

School of Mathematics and Information Science, Guangzhou University\\
Guangzhou 510006, Guangdong, P. R. China\par

Email address: fuhoushan@gzhu.edu.cn
\end{center}
\begin{abstract}
Let \(b(M)\), \(d(M)\), and \(r(M)\) denote the number of bases,
the number of circuits, and the rank of a matroid \(M\), respectively.
Using a weighted count of fundamental circuits, we prove that every
nonempty simple binary matroid \(M\) with no coloops satisfies
\[
 2b(M)\ge(r(M)+1)d(M),
\]
with equality if and only if \(M\) is isomorphic to the Fano matroid.
This settles a conjecture recorded by Oxley in 1983.
For every \(e\in E(M)\), we also establish the deletion bound
\(b(M\backslash e)\ge d(M)\) and characterize its equality cases.
The deletion bound yields a sharp linear lower bound for basis growth
under successive series extensions.
Finally, under a circuit-size exclusion condition, every pair \((C,e)\)
with \(C\) a circuit of a prescribed size and \(e\in C\) can be
assigned a distinct basis containing \(C-e\).
\par
\noindent\textbf{Keywords:} Binary matroid; bases; circuits; enumeration
\par
\noindent\textbf{Mathematics Subject Classification:} 05B35
\end{abstract}

\section{Introduction}

For a matroid \(M\), let \(b(M)\), \(d(M)\), and \(r(M)\) denote
the number of bases, the number of circuits, and the rank, respectively.
For the cycle matroid of a connected graph, \(b(M)\) counts spanning
trees and \(d(M)\) counts cycles. Quirk and Seymour showed that a
connected simple graph has at least as many spanning trees as cycles,
a result recorded by Welsh as a private communication
\cite[p.~287]{Welsh1976}. For matroids of fixed rank and size, Welsh
also posed an average version of this comparison
\cite[p.~288, Exercise~5]{Welsh1976}.
However, for simple non-binary matroids, the ratio \(b(M)/d(M)\)
can be arbitrarily close to zero \cite[p.~363]{Oxley2011}.

For every simple binary matroid \(M\), Oxley
\cite[Theorem~2.1]{Oxley1983} established
\begin{equation}\label{eq:oxley}
 b(M)\ge2d(M),
\end{equation}
with equality precisely when deleting all coloops leaves the Fano
matroid \(F_7\). For nonempty simple binary matroids with no coloops,
Oxley further obtained the rank-dependent bound
\(b(M)>6(r(M)+1)d(M)/19\) \cite[Theorem~3.1]{Oxley1983}.
The referee of that paper conjectured the sharper inequality
\(2b(M)\ge(r(M)+1)d(M)\).
The conjecture is recorded as Conjectures~9.4.12 and~15.5.16
in \cite{Oxley2011}.
Our first theorem settles this conjecture and determines the equality case.

\begin{theorem}\label{thm:rank}
Let \(M\) be a nonempty simple binary matroid with no coloops. Then
\[
 2b(M)\ge(r(M)+1)d(M).
\]
Equality holds if and only if \(M\cong F_7\).
\end{theorem}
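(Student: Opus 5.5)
We aim to prove the inequality by double counting pairs \((B,C)\), where \(B\) is a basis and \(C\) is a fundamental circuit of \(B\), but we weight the pairs so that every circuit gets total weight at least \((r+1)/2\) and every basis gives out total weight at most \(1\). Write \(r=r(M)\). Fix a basis \(B\). For each \(e\in E-B\) let \(C(e,B)\) be the fundamental circuit. The \(|E|-r\) fundamental circuits of \(B\) form a basis of the cycle space of \(M\), which is a \(\FF\)-vector space. Every circuit \(C\) is the symmetric difference of the fundamental circuits \(C(e,B)\) with \(e\in C-B\). In particular, \(C\) is itself fundamental with respect to \(B\) exactly when \(|C-B|=1\), that is, when \(C-e\subseteq B\) for some \(e\in C\). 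So for a circuit \(C\) the number of bases \(B\) with \(|C-B|=1\) equals
\[
\sum_{e\in C} \#\{B: C-e\subseteq B,\ e\notin B\} \;=\; \sum_{e\in C} b\bigl(M/(C-e)\backslash e\bigr).
\]
The plan is to define the weight
\[
\wloc{C}=\sum_{e\in C}\frac{b(M/(C-e)\backslash e)}{?}
\]
and then arrange a normalisation so that each basis contributes at most \(2/(r+1)\) in total. The concrete choice we try first: the pair \((B,C)\), with \(C\) fundamental for \(B\), receives weight \(1/|C|\). The total weight over all pairs is then at most \(\sum_B (|E|-r)\max(1/|C|)\). This is too crude, so instead we bound each circuit from below and each basis from above via an injection.

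The key local step is the following. For a circuit \(C\) with \(|C|=k\) and each \(e\in C\), the minor \(M/(C-e)\backslash e\) has rank \(r-k+1\) and no coloops, since \(M\) has no coloops and \(e\) is parallel to nothing after contraction. Because \(M\) is simple and binary with no coloops, we have \(b(M/(C-e)\backslash e)\ge 1\) and in fact it is typically larger. We want to show
\[
\sum_{e\in C} b\bigl(M/(C-e)\backslash e\bigr)\;\ge\;\frac{r+1}{2}\cdot \frac{\text{(number of fundamental circuits per basis)}}{\ldots}
\]
More cleanly, we reorganise the count as \(\sum_B \#\{\text{fundamental circuits of }B\} = \sum_C \#\{B: C \text{ fundamental for } B\}\). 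Then we compare each side against \(b\) and \(d\) using a weight \(\omega(B,C)\) chosen as \(2/(r+1)\) divided by a local multiplicity. The weight is designed so that \(\sum_{C}\omega(B,C)\le 1\) for each \(B\) (giving \(b\) after summing over \(B\)) and \(\sum_B\omega(B,C)\ge 2/(r+1)\) for each \(C\) (giving \(2d/(r+1)\) after summing over \(C\)). Combining the two gives \(b(M)\ge \tfrac{r+1}{2}d(M)\), which is the claim.

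The main obstacle is choosing \(\omega\) so that both local inequalities hold. Small circuits (triangles) lie in many bases' fundamental systems, while large circuits lie in few. The local inequality for a circuit \(C\) needs \(\sum_{e\in C} b(M/(C-e)\backslash e)\) to be large relative to \(|C|\), and here we would invoke Oxley's bound \eqref{eq:oxley} and induction on the minors \(M/(C-e)\backslash e\) after simplification. For the equality analysis, we trace where the local inequalities are tight. Tightness should force every circuit of \(M\) to behave like a circuit in \(F_7\): every pair of elements lies on a triangle, and the rank is \(3\). We then finish by checking \(F_7\) directly: it has \(b=28\) and \(d=14\) (\(7\) triangles and \(7\) four-circuits), so \(2\cdot 28=(3+1)\cdot 14\) and equality holds. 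I expect constructing the per-circuit lower bound, in particular handling circuits of size near \(r+1\) where the contracted minor is tiny, to be the hardest step.
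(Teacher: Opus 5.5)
\textbf{There is a genuine gap.} The proposal never defines the weight \(\omega\) and never proves either of the two local inequalities it relies on. You flag this yourself as ``the main obstacle'', but it is the entire content of the theorem, so what you have is a plan rather than a proof.

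Where the scheme is made explicit, it is also mis-normalised. From \(\sum_C\omega(B,C)\le1\) for every basis and \(\sum_B\omega(B,C)\ge2/(r+1)\) for every circuit, double counting yields only \(b(M)\ge2d(M)/(r+1)\), which is weaker than \eqref{eq:oxley}. You need demand \((r+1)/2\) per circuit against supply \(1\) per basis, as in your first sentence.

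Even with that fixed, look at what the scheme actually requires. By the fractional form of Hall's theorem (max-flow/min-cut), such an \(\omega\) exists if and only if \(|N(\mathcal S)|\ge\frac{r+1}2|\mathcal S|\) for every set \(\mathcal S\) of circuits. Here \(N(\mathcal S)\) is the set of bases at which some member of \(\mathcal S\) is fundamental. Since \(M\) has no coloops, every basis has a fundamental circuit, so the case \(\mathcal S=\CC(M)\) is exactly the theorem. Your route therefore asks for a subset-wise strengthening of it, and the proposal gives no mechanism for that.

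The per-circuit tool you suggest cannot succeed on its own, exactly where you expect trouble. That tool bounds \(b(M/(C-e)\backslash e)\) via \eqref{eq:oxley} or induction; note this minor is just \(M/C\), because \(e\) is a loop of \(M/(C-e)\), not ``parallel to nothing''. The difficulty is with the largest circuits:
\begin{itemize}
\item a spanning circuit is fundamental at only \(r+1\) bases;
\item an \(r\)-element circuit \(C\) is fundamental at \(r\,|E(M)-\cl_M(C)|\) bases, which can be as few as \(2r\) (e.g.\ a parallel connection of an \(r\)-circuit and a triangle);
\item the bases serving these two families overlap.
\end{itemize}
No lower bound on an individual \(b(M/C)\) resolves this competition, because the binding constraint is per basis; a joint count is needed. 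Your equality discussion is likewise unsupported: nothing in the argument forces rank \(3\) or triangles through every pair.

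For comparison, the paper does not look for a per-basis, per-circuit inequality covering all circuit sizes. It argues by strong induction on \(|E(M)|\):
\begin{itemize}
\item rank at most three follows from \eqref{eq:oxley} and its equality case, which is where \(F_7\) arises;
\item direct sums use \(b(M)=b(M_1)b(M_2)\);
\item connected non-cosimple matroids are reduced via Oxley's series-pair lemma (\autoref{lem:series}) together with \autoref{lem:pointed};
\item for connected cosimple \(M\) with \(r\ge5\), summing the inductive bound over all single-element deletions leaves the correction term \((r+1)\sum_j(j-r)d_j(M)\).
\end{itemize}
That correction term is at most \(2(r+1)\bigl(\frac12d_{r-1}(M)+d_r(M)+\frac32d_{r+1}(M)-d(M)\bigr)\).

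A weighted count is used only to bound this three-size combination and the rank-four case (\autoref{prop:rank-four}). Each occurrence of \(C\) as a fundamental circuit gets weight \(1/b(M/C)\), so each circuit receives total weight \(|C|\). The interaction of sizes \(r\) and \(r+1\) is handled by identifying the bases where both are fundamental with edges of a folded cube, and then applying the Ellis--Leader edge bound (\autoref{lem:cube-correspondence} through \autoref{prop:joint}). Your proposal contains none of these ingredients, nor a substitute for them.
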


For ranks at least four, the rank bound strengthens \eqref{eq:oxley}. Equality at \(F_7\) shows that the coefficient \(2\) cannot be reduced. The exclusion of coloops is necessary because adjoining coloops
increases the rank without changing either count.

Oxley also compared the numbers of bases and circuits containing
a fixed independent set \cite[Theorem~1.3]{Oxley1983}.
We instead ask whether deleting a prescribed element leaves at least
as many bases as there are circuits of the original matroid.

\begin{theorem}\label{thm:deletion}
Let \(M\) be a nonempty simple binary matroid with no coloops, and \(e\in E(M)\). Then
\[
 b(M\backslash e)\ge d(M).
\]
Equality holds if and only if \(E(M)\) is a circuit, or \(M\) is a
parallel connection of a circuit and a triangle, with \(e\) outside the
triangle.
In particular, the inequality is strict
when \(M\) is cosimple.
\end{theorem}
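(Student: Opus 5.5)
I plan to build an injection from the circuits of \(M\) into the bases of \(M\backslash e\), using fundamental circuits with respect to a single fixed basis. Since \(M\) has no coloops, \(e\) is not a coloop, so some basis \(B\) of \(M\) avoids \(e\), and \(B\) is then also a basis of \(M\backslash e\). Because \(M\) is binary, every circuit \(C\) is the symmetric difference of the fundamental circuits \(C(f,B)\) for \(f\in C-B\). Thus \(C\) is determined by the nonempty set \(C-B\subseteq E-B\). Equivalently, circuits correspond to certain nonzero vectors of the cycle space \(\GF(2)^{E-B}\), namely those whose associated symmetric difference is a circuit rather than a disjoint union of two or more circuits.

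To each circuit \(C\) I will assign a basis of \(M\backslash e\) obtained from \(B\) by exchanges. The natural choice is \(B_C=(B-X_C)\cup(C-B)\) whenever \(C-B\) avoids \(e\), where \(X_C\subseteq C\cap B\) is chosen so that \(B_C\) is a basis. Such an \(X_C\) exists: contract \(C-B\) and use that \(C\cap B\) spans \(C-B\) in \(M\), which lets us extend \(C-B\) to a basis inside \((C-B)\cup B\). The difficulty is making the assignment injective and dealing with circuits that contain \(e\). A cleaner route is probably to order the elements of \(E-B\) and use a canonical rule. Two circuits \(C,C'\) with different \(C-B\) then give bases \(B_C,B_{C'}\) with different intersections with \(E-B\), and hence are distinct. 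So the map \(C\mapsto B_C\) is injective on circuits avoiding \(e\), provided \(e\notin B\).

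Circuits through \(e\) must go to bases that avoid \(e\) and contain \(C-e-B\). Here I would use a second disjointness trick: for these circuits, the part \(C-B\) contains \(e\). Map \(C\) to a basis \(B'_C\) whose intersection with \(E-B\) is \((C-B)-e\), with a different exchange pattern on the \(B\)-side. Collisions with the first family occur only when \((C-B)-e=C'-B\) for some circuit \(C'\) avoiding \(e\), and then \(C\triangle C'\) is the fundamental circuit \(C(e,B)\). For such pairs I need two distinct bases with the same trace on \(E-B\). Simplicity guarantees \(|C(e,B)\cap B|\ge2\), so there are at least two choices of which element of \(B\) to drop.

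\textbf{Main obstacle.} The delicate step is showing that enough distinct bases with a prescribed trace \(T=C'-B\) exist to absorb both \(C'\) and \(C'\triangle C(e,B)\). I expect the count to be at least the number of elements of \(B\) that can be dropped, which is governed by \(C(e,B)\) and \(C'\). The equality analysis then pins down the configurations where only the minimal number of bases exist. These should be exactly the case where \(E(M)\) itself is a circuit, and the parallel connection of a circuit with a triangle, with \(e\) outside the triangle, where every count is tight. Finally, cosimplicity rules out both of these, since in each some pair of elements forms a series class, which gives the strict inequality.
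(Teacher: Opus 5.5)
\textbf{There is a genuine gap: the map is not well defined as stated, and the step you call the main obstacle is false in the form you need.}

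First, a set $X_C\subseteq C\cap B$ making $(B-X_C)\cup(C-B)$ a basis need not exist. When $|C-B|\ge2$, the set $C\cap B$ spans no element of $C-B$, since otherwise a proper subset of $C$ would be dependent. Moreover $|C-B|$ can exceed $|C\cap B|$: take the line $\{110,111,001\}$ of $F_7$ with $B=\{100,010,001\}$. Letting $X_C$ range over all of $B$ repairs this whenever $C\cap B\neq\varnothing$, because an independent $T\subseteq E(M)-B$ extends to a basis inside $T\cup B$. However, a circuit contained in $E(M)-B-e$ has dependent trace, so no basis has trace $C-B$. For example, take $e=111$ and the line $\{110,101,011\}$ in the same $F_7$. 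Your plan assigns such circuits no image.

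Second, with a fixed $B$, a trace need not carry enough bases to absorb a colliding pair. In $\FF^4$ with $B=\{b_1,b_2,b_3,b_4\}$, let $E(M)=B\cup\{t_1,t_2,t_3,e\}$, where $t_1=b_1+b_2$, $t_2=b_1+b_3$, $t_3=b_1+b_2+b_3$, and $e=b_1+b_4$. This $M$ is simple, binary, and has no coloops. The sets $C'=\{t_1,t_2,t_3,b_1\}$ and $C=C'\triangle C(e,B)=\{t_1,t_2,t_3,e,b_4\}$ are circuits that both require the trace $T=\{t_1,t_2,t_3\}$. But $\spanof(T)=\spanof\{b_1,b_2,b_3\}$, so $T\cup\{b_4\}$ is the only basis with trace $T$.

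The theorem still holds here, with $d(M)=12$ and $b(M\backslash e)=16$, but only because some circuits can be moved to other traces. Such rerouting needs a global argument, for instance a Hall-type condition over all traces, and that is the whole content of the theorem. The proposal supplies nothing of this kind, and the equality classification is only asserted.

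The paper avoids injections entirely and argues by induction on $|E(M)|$. Direct sums are handled by the induction hypothesis and \eqref{eq:oxley}. The connected cosimple case combines \autoref{thm:rank} with the exchange estimate $2b_e(M)\le(r(M)-1)b(M\backslash e)$ of \autoref{lem:exchange-count}. The series-pair case uses Oxley's reduction in \autoref{lem:series}, and this is where the two equality families appear.
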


The deletion bound also holds when coloops are allowed. If \(e\) is
a coloop, then \(b(M\backslash e)=b(M)>d(M)\).
If \(e\) is not a coloop, deleting all coloops reduces the bound
and its equality cases to
\autoref{thm:deletion}.

A one-element series extension preserves the number of circuits,
while the increase in the number of bases equals the number of bases
avoiding the extended element. Therefore, applying
\autoref{thm:deletion} at each step gives the following growth bound.

\begin{corollary}\label{cor:growth}
Let \(N\) be a nonempty simple binary matroid with no coloops, and
\(s\) be a nonnegative integer.
If \(M\) is obtained from \(N\) by \(s\) successive one-element
series extensions, then
\[
 b(M)\ge b(N)+s d(N).
\]
Equality holds if and only if \(s=0\), \(E(M)\) is a circuit, or
\(M\) is a parallel connection of a circuit and a triangle.
If \(N\) is cosimple and \(s\ge1\), the inequality is strict.
\end{corollary}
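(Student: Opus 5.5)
The plan is to induct along the chain of series extensions, applying \autoref{thm:deletion} once at each step to the element just added. Write \(N=M_0,M_1,\dots,M_s=M\), where \(M_{i}\) is obtained from \(M_{i-1}\) by adding an element \(f_i\) in series with some \(e_i\in E(M_{i-1})\).

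First I check that every \(M_i\) again satisfies the hypotheses of \autoref{thm:deletion}.
\begin{itemize}
\item Binarity is preserved, since a series extension is the dual of a parallel extension.
\item There are no coloops: \(f_i\) lies in every circuit of \(M_i\) through \(e_i\), and \(e_i\) was not a coloop of \(M_{i-1}\). Every other circuit of \(M_{i-1}\) survives, possibly with \(f_i\) added, so no old element becomes a coloop either.
\item Simplicity holds. A circuit of size at most two containing \(f_i\) would contract in \(M_{i-1}=M_i/f_i\) to a loop. The only exception would be the pair \(\{e_i,f_i\}\), which is a circuit only if \(e_i\) is a loop of \(M_{i-1}\), and it is not.
\end{itemize}

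Next I record the two counting identities. Every circuit of \(M_i\) either avoids both \(e_i,f_i\) or contains both. Hence \(C\mapsto C-f_i\) is a bijection from the circuits of \(M_i\) to those of \(M_{i-1}\), and \(d(M_i)=d(M_{i-1})=d(N)\). The bases of \(M_i\) containing \(f_i\) correspond to the bases of \(M_i/f_i=M_{i-1}\). The bases avoiding \(f_i\) are exactly the bases of \(M_i\backslash f_i\), since \(f_i\) is not a coloop. Therefore
\[
b(M_i)=b(M_{i-1})+b(M_i\backslash f_i)\ge b(M_{i-1})+d(M_i)=b(M_{i-1})+d(N),
\]
by \autoref{thm:deletion} applied to \(M_i\) and \(f_i\). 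Summing over \(i=1,\dots,s\) gives \(b(M)\ge b(N)+sd(N)\).

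For the equality statement, equality holds if and only if \(s=0\) or every step is tight. A step is tight if and only if, by \autoref{thm:deletion}, \(M_i\) is a circuit, or a parallel connection of a circuit and a triangle with \(f_i\) outside the triangle. If \(s\ge1\), tightness at the last step forces \(M\) to be of one of the two forms.

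The main obstacle is the converse: if \(M\) has one of these forms, every step must be tight. Each \(M_i\) equals \(M/\{f_{i+1},\dots,f_s\}\), where the contracted elements lie in series classes. If \(M\) is a circuit, then every contraction of it is a circuit, and we are done.

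If \(M=P(C,T)\) with basepoint \(p\) and \(T=\{p,a,b\}\), I will list its series classes. These are \(C-p\) and \(\{a,b\}\); the latter class exists because every circuit \(C\), \(T\), \(C\triangle T\) meeting \(\{a,b\}\) contains both elements. I then need to show two things:
\begin{itemize}
\item contracting a non-final element of a series class yields a matroid of the same form, up to isomorphism and relabelling;
\item the last element added can always be taken outside a triangle.
\end{itemize}
Relabelling is legitimate because series-class elements are interchangeable under automorphisms. In the small case where \(f_i\) ends up in \(\{a,b\}\), I would use that \(P(C_3,C_k)\cong P(C_k,C_3)\), or re-choose which triangle is designated, to put \(f_i\) outside the triangle. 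Checking these small cases carefully is where I expect the fiddly work.

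Finally, for the strictness claim, suppose \(N\) is cosimple and \(s\ge1\). Equality at step \(1\) would force \(M_1\) into one of the two forms, so \(N=M_1/f_1\) is a circuit or again a parallel connection of a circuit and a triangle. Both of these have nontrivial series pairs: any two elements of a circuit of size at least three, or the pair \(\{a,b\}\). This contradicts cosimplicity, so the inequality is strict.
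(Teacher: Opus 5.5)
Your inequality and the ``only if'' half of the equality statement are sound. You apply \autoref{thm:deletion} to \(M_i\) and the new element \(f_i\), whereas the paper applies it to \(M_{i-1}\) and the extended element. This is legitimate, and it yields the form of \(M\) directly from tightness at the last step, without lifting it from \(M_{s-1}\) through \eqref{eq:series-circuits}. Your strictness argument for cosimple \(N\) also works. The series pair \(\{a,b\}\) of \(M_1\) avoids \(f_1\), so it remains a cocircuit of \(N=M_1/f_1\).

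The gap is in the converse for \(M=P(C,T)\). You leave it unverified, and the remedy you sketch for the troublesome case is wrong. Suppose a series pair \(\{e_i,f_i\}\) were \(\{a,b\}=T-p\) with \(|C|\ge4\). Then \(T\) is the only triangle, so neither \(P(C_3,C_k)\cong P(C_k,C_3)\) nor redesignating the triangle can place \(f_i\) outside it. \autoref{thm:deletion} would then make step \(i\) strict. Your first bullet also fails for the class \(\{a,b\}\): contracting \(a\) turns \(T\) into the parallel pair \(\{p,b\}\).

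The missing observation is that this case never occurs. You already showed that each \(M_{i-1}=M_i/f_i\) is simple. Hence every circuit of \(M_i\) through \(f_i\), equivalently through \(e_i\), has at least four elements. This forces \(\{e_i,f_i\}\subseteq C-p\) and \(|C|\ge4\). Then \(f_i\notin T\), so step \(i\) is tight by \autoref{thm:deletion}. Moreover, \(M_{i-1}=P(C-f_i,T)\) is again a parallel connection of a circuit and the same triangle. Downward induction on \(i\) then completes the converse with no relabelling. In particular, \(P(C_3,C_3)\) occurs only with \(s=0\). This is exactly the backward trace in the paper's proof.
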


Finally, we ask when the independent sets \(C-e\), for circuits \(C\)
of prescribed sizes and \(e\in C\), can be extended simultaneously
to distinct bases.
Let \(M\) be a simple binary matroid and \(J\) a finite set of positive
integers. If \(M\) has no \((q-p+2)\)-element circuit whenever
\(p,q\in J\) and \(p<q\), \autoref{thm:distinct-extensions}
guarantees distinct bases containing \(C-e\) for the pairs \((C,e)\)
with \(C\) a circuit, \(|C|\in J\), and \(e\in C\).
Consequently, \(\sum_{j\in J}j d_j(M)\le b(M)\), where
\(d_j(M)\) denotes the number of \(j\)-element circuits.

The proofs use a weighted count in which each circuit contributes
its size, regardless of how many bases have it as a fundamental
circuit. Low-rank contraction counts and a folded-cube edge bound
control the three largest circuit sizes jointly, allowing deletion
induction to establish the rank bound. The same weights also verify
Hall's condition for distinct basis extensions.

The paper is organized as follows. \autoref{sec:prelim} develops
the weighted count. \autoref{sec:weights} derives the joint bounds,
and \autoref{sec:proofs} proves the rank and deletion bounds and
the growth corollary. \autoref{sec:extensions} constructs distinct
basis extensions.

\section{Preliminaries}\label{sec:prelim}

We collect the notation and basic matroid facts, then establish
counting identities for circuits and bases. These identities are used
both in the weighted bounds and in the construction of distinct
basis extensions.

\subsection{Notation and basic facts}

All matroids are finite. Terminology and notation follow
Oxley \cite{Oxley2011}.
We write \(E(M)\), \(\BB(M)\), \(\CC(M)\), and \(\cl_M\) for
the ground set, set of bases, set of circuits, and closure operator.
Deletion, contraction, restriction, and duality are denoted by
\(M\backslash X\), \(M/X\), \(M|X\), and \(M^*\), respectively.
We also write \(\CC^*(M)\coloneqq\CC(M^*)\) for the set of cocircuits.
For each integer \(j\ge1\), let \(d_j(M)\) be the number of \(j\)-element
circuits. For \(e\in E(M)\), let \(b_e(M)\) and \(d_e(M)\)
count the bases and circuits containing \(e\), respectively. We abbreviate
\(X-\{e\}\) to \(X-e\). A \emph{triangle} is a three-element circuit,
a \emph{triad} is a three-element cocircuit, and a \emph{series pair} is a
two-element cocircuit. A matroid is \emph{cosimple} if its dual is simple.
A \emph{parallel class} is a maximal set of nonloops any two
distinct members of which form a two-element circuit. For a direct sum \(M=M_1\oplus M_2\), we have
\begin{equation}\label{eq:direct-sum-counts}
 r(M)=r(M_1)+r(M_2),\qquad
 d(M)=d(M_1)+d(M_2),\qquad
 b(M)=b(M_1)b(M_2).
\end{equation}

For \(B\in\BB(M)\) and \(e\in E(M)-B\), the
\emph{fundamental circuit} \(C(e,B)\) is the unique circuit
contained in \(B\cup\{e\}\). For \(f\in B\), the
\emph{fundamental cocircuit} \(C^*(f,B)\) is the unique cocircuit
contained in \((E(M)-B)\cup\{f\}\).
Basis exchange gives
\begin{equation}\label{eq:exchange}
 (B-f)\cup\{e\}\in\BB(M)
 \quad\Longleftrightarrow\quad f\in C(e,B)
 \quad\Longleftrightarrow\quad e\in C^*(f,B).
\end{equation}
A matroid is \emph{binary} if it is representable over \(\FF\).
In a binary matroid \(M\), every circuit meets every cocircuit in an
even number of elements:
\begin{equation}\label{eq:binary-orthogonality}
 |C\cap C^*|\equiv0\pmod2,
 \qquad C\in\CC(M),\ C^*\in\CC^*(M).
\end{equation}
The symmetric difference of two circuits is a disjoint union
of circuits:
\begin{equation}\label{eq:binary-symmetric-difference}
 C_1\triangle C_2=D_1\mathbin{\dot\cup}\cdots\mathbin{\dot\cup}D_k,
 \qquad C_1,C_2,D_1,\ldots,D_k\in\CC(M).
\end{equation}
The union may be empty. By duality,
\eqref{eq:binary-symmetric-difference} also holds for cocircuits.
Contraction preserves cosimplicity, since deletion preserves
simplicity in the dual. If \(M\) is cosimple and \(e\in E(M)\), then
\(e\) is not a coloop and \(M^*/e\) has no loops, since \(M^*\) is simple.
Thus, \(r(M\backslash e)=r(M)\), and
\(M\backslash e=(M^*/e)^*\) has no coloops.

When using a representation of a simple binary matroid \(M\),
we fix one of full row rank, with columns labelled by \(E(M)\).
In vector sums, each element denotes its column in \(\FF^{r(M)}\).
These columns are distinct and nonzero. For every circuit,
\begin{equation}\label{eq:circuit-column-sum}
 \sum_{e\in C}e=0,\qquad C\in\CC(M).
\end{equation}
For a basis \(B\), put \(\sigma_B\coloneqq\sum_{x\in B}x\),
and use the columns of \(B\) as a coordinate basis.
For \(e\in E(M)-B\), its coordinates are the coefficients in its
unique expression as a linear combination of the columns of \(B\).
By \eqref{eq:circuit-column-sum}, the nonzero coordinates are
indexed precisely by the elements of \(C(e,B)-e\).
The notation \(\sigma_B\in E(M)\) means that this vector sum is itself
a column of the representation.
A circuit is \emph{spanning} if it has rank \(r(M)\), or
equivalently, size \(r(M)+1\).
If \(r(M)\ge2\), then
\begin{equation}\label{eq:spanning-bases}
 B\text{ lies in a spanning circuit}
 \quad\Longleftrightarrow\quad \sigma_B\in E(M),
\end{equation}
and that circuit is \(B\cup\{\sigma_B\}\).
Each spanning circuit contains \(r(M)+1\) bases,
and no basis belongs to two spanning circuits.
Consequently, for \(r(M)\ge2\),
\begin{equation}\label{eq:spanning-basis-count}
 \bigl|\{B\in\BB(M):\sigma_B\in E(M)\}\bigr|
 =(r(M)+1)d_{r(M)+1}(M).
\end{equation}
For every \(e\in E(F_7)\), we will also use the counts
\begin{equation}\label{eq:fano}
 b(F_7)=28,\quad d(F_7)=14,\quad
 b(F_7\backslash e)=16,\quad d(F_7\backslash e)=7.
\end{equation}

\subsection{Counting by bases and circuits}
To compare circuit counts with the number of bases, we count each
circuit through the bases at which it is fundamental.
This allows us to estimate the contribution at a fixed basis and
then sum over all bases. The same circuit may occur at several
bases. We first determine this multiplicity and choose weights
that give each circuit total weight equal to its size.

Consider pairs \((B,C)\) in which \(B\) is a basis of \(M\) and
\(C\) is a fundamental circuit with respect to \(B\).
Let \(X\subseteq E(M)\), and fix a basis \(B_X\) of \(M|X\).
The bases of \(M/X\) are given by
\begin{equation}\label{eq:contraction-bases}
 \BB(M/X)=\{B\subseteq E(M)-X:B\cup B_X\in\BB(M)\}.
\end{equation}
For a circuit \(C\) and an element \(e\in C\), the set \(C-e\)
is a basis of \(M|C\). By \eqref{eq:contraction-bases}, the map
\(B'\mapsto(C-e)\cup B'\) is a bijection from \(\BB(M/C)\) to
the bases of \(M\) containing \(C-e\).
Every such basis \(B\) avoids \(e\), since \(C\) is dependent,
and satisfies \(C(e,B)=C\).

For distinct \(e,f\in C\), no basis contains both \(C-e\) and
\(C-f\), since their union is \(C\).
Conversely, whenever \(C\) is fundamental with respect to a basis
\(B\), the unique element of \(C-B\) determines \(e\).
Counting over \(e\in C\) gives
\begin{equation}\label{eq:fundamental-count}
 \bigl|\{B\in\BB(M):|B\cap C|=|C|-1\}\bigr|=|C|b(M/C).
\end{equation}
To remove the extension factor \(b(M/C)\) from this count, assign
each pair \((B,C)\) weight \(1/b(M/C)\).
For a fixed basis \(B\), let \(\dloc{j}\) and \(\wloc{j}\)
be the number and total weight, respectively, of its
\(j\)-element fundamental circuits.
Explicitly,
\begin{align}
 \dloc{j}&\coloneqq\bigl|\{e\in E(M)-B:|C(e,B)|=j\}\bigr|,
 \notag\\
 \wloc{j}&\coloneqq\sum_{e\in E(M)-B,\,|C(e,B)|=j}
 \frac1{b(M/C(e,B))}.
 \label{eq:local-weight}
\end{align}
By \eqref{eq:fundamental-count}, each \(j\)-element circuit \(C\)
is fundamental at \(j b(M/C)\) bases. Its weight \(1/b(M/C)\)
at each basis gives total weight \(j\), so
\begin{equation}\label{eq:weight-total}
 \sum_{B\in\BB(M)}\wloc{j}=j d_j(M).
\end{equation}
The unweighted count follows directly from \eqref{eq:fundamental-count}:
\begin{equation}\label{eq:count-total}
 \sum_{B\in\BB(M)}\dloc{j}
 =j\sum_{C\in\CC(M),\,|C|=j}b(M/C).
\end{equation}
For a simple matroid, summing \eqref{eq:weight-total} after division
by \(j\) recovers the total circuit count:
\begin{equation*}
 d(M)=\sum_{B\in\BB(M)}\sum_{j=3}^{r(M)+1}\frac{\wloc{j}}j.
\end{equation*}

\section{Weighted circuit bounds}\label{sec:weights}
Oxley \cite[Lemmas~2.2 and~2.7]{Oxley1983} proved the separate bounds
\[
(r(M)+1)d_{r(M)+1}(M)\le b(M)\qquad\text{and}\qquad r(M)d_{r(M)}(M)\le b(M)
\]
for every simple binary matroid \(M\).
Oxley also used joint counts for two circuit sizes
\cite[proof of Lemma~2.11]{Oxley1983}.
To control the correction term in the deletion induction for
\autoref{thm:rank}, we need a bound on three circuit sizes together.
For every simple cosimple binary matroid \(M\) with \(r(M)\ge5\),
we establish
\[
 \frac12d_{r(M)-1}(M)+d_{r(M)}(M)+\frac32d_{r(M)+1}(M)
 <\frac{2b(M)}{r(M)+1}.
\]
By \eqref{eq:weight-total}, summing bounds on the local weights
yields bounds on circuit counts. Rank-one and rank-two contractions
control the local weights, while a folded-cube edge bound controls
the joint occurrence of circuits of sizes \(r(M)\) and \(r(M)+1\).

\subsection{Weights at a fixed basis}

To bound the weights at a basis, we first estimate the number of
bases in low-rank contractions.
For a circuit \(C\), we have \(r(M/C)=r(M)-|C|+1\)
by the contraction rank formula.
When \(|C|=r(M)\) or \(r(M)-1\), the contraction \(M/C\)
has rank one or two, respectively. If \(M\) is cosimple, so is
\(M/C\).
For the rank-two contractions, we use the following bounds, retaining any loops and
parallel elements of the contraction.

\begin{lemma}\label{lem:rank-two}
Let \(M\) be a cosimple binary matroid of rank two with \(t\)
nonloops.
\begin{enumerate}
\item[\(\mathrm{(i)}\)] We have \(b(M)\ge\max\{8,3(t-3)\}\).
\item[\(\mathrm{(ii)}\)] If \(M\) has three nonempty parallel classes, then
\(b(M)\ge3t-7\).
\item[\(\mathrm{(iii)}\)] If \(M\) has three nonempty parallel classes,
each of size at least two, then \(b(M)\ge4t-12\).
\end{enumerate}
\end{lemma}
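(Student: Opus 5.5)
The plan is to reduce everything to explicit formulas in the sizes of the parallel classes, and then minimize a quadratic over integer points.

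\emph{Setup.} Let \(M\) be binary of rank two. Its simplification is a restriction of \(PG(1,2)\), which has three points. So the nonloops of \(M\) split into at most three nonempty parallel classes. Since the rank is two, there are at least two classes. Write \(a,b,c\ge0\) for the class sizes, with \(a+b+c=t\), where exactly one of them is zero when there are only two classes.

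A basis is a pair of nonloops from distinct classes, so \(b(M)=ab+bc+ca\). The hyperplanes are the sets of the form (loops) \(\cup\) (one class). Hence the cocircuits are the complements of these, namely the unions of the other classes.

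\emph{Cosimplicity constraints.} Cosimplicity means there are no coloops and no series pairs, so every cocircuit has at least three elements.
\begin{itemize}
\item[] With two classes \(A,B\), the cocircuits are \(A\) and \(B\) themselves, so \(a,b\ge3\).
\item[] With three classes, the cocircuits are the pairwise unions, so \(a+b,\ b+c,\ c+a\ge3\), with every class size at least one.
\end{itemize}

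\emph{Two classes.} Here \(b(M)=ab\) with \(a,b\ge3\). Then \((a-3)(b-3)\ge0\) gives \(ab\ge3(a+b)-9=3(t-3)\). Also \(ab\ge9\ge8\). This settles (i) in this case.

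\emph{Three classes: minimizing \(ab+bc+ca\).} Write \(ab+bc+ca=\tfrac12\bigl(t^2-a^2-b^2-c^2\bigr)\). For fixed \(t\), this expression decreases when the sizes are spread apart. Concretely, moving one element from a class of size \(x\) to a class of size \(y\ge x\) changes the sum of squares by \(2(y-x)+2>0\).

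So I will push mass into the largest class for as long as the constraints allow.

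\emph{Case (ii): all sizes at least one.} Two classes of size one are impossible, since their union would be a cocircuit of size two. So, ordering \(a\le b\le c\), we have \(a\ge1\) and \(b\ge2\). Repeatedly moving mass to \(c\) reaches \((1,2,t-3)\); this needs \(t\ge5\), and \(t\ge5\) is in fact forced by the constraints. The value there is \(2+3(t-3)=3t-7\).

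To make this rigorous, I will show directly that \(ab+bc+ca-(3t-7)\ge0\). Substituting \(c=t-a-b\) gives
\[
ab+(a+b)(t-a-b)-3t+7,
\]
which is linear in \(t\) with nonnegative slope \(a+b-3\). So it suffices to check the smallest admissible \(t\) for given \(a,b\), namely \(c=b\), and compare. If that bookkeeping gets fiddly, I will instead use the exchange argument above, applied step by step within the feasible region. The exchange keeps \(a\ge1\), \(b\ge2\), hence preserves feasibility.

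\emph{Consequence for (i).} Since \(t\ge5\), we get \(3t-7\ge8\) and \(3t-7>3(t-3)\). This gives (i) in the three-class case as well.

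\emph{Case (iii): all sizes at least two.} The same exchange argument drives the configuration to \((2,2,t-4)\), whose value is \(4+4(t-4)=4t-12\). Equivalently, I can verify directly that
\[
ab+bc+ca-4(a+b+c)+12=(a-2)(b-2)+(b-2)(c-2)+(c-2)(a-2)\ge0.
\]
This identity is clean and immediate.

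\emph{Anticipated difficulty.} The main obstacle is the three-class minimization in (ii). There, the feasibility constraint is asymmetric (one class of size one is allowed, but not two), so there is no clean product identity like the one in (iii). I will first try the analogous identity
\[
ab+bc+ca-3(a+b+c)+7=(a-1)(b-2)+\dotsb,
\]
ordered so that \(a\le b\le c\). If this does not work out, I will fall back on the exchange argument.
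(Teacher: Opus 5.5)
Your proposal is correct and follows the paper's proof: the same formula $b(M)=ab+bc+ca$ from the parallel classes, the same cocircuit constraints, $(a-3)(b-3)\ge0$ in the two-class case, and the same three-term identity for (iii). For (ii), your linear-in-$t$ reduction closes as claimed: at $c=b$ the expression is $a(2b-3)+b^2-6b+7\ge(b-2)^2\ge0$. The paper instead uses the identity $b(M)-3t+7=(a-1)(b+c-3)+(b-2)(c-2)$, which is the completed form of the $(a-1)(b-2)+\dotsb$ identity you were looking for.
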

\begin{proof}
The nonempty parallel classes correspond to the distinct nonzero
vectors occurring in a two-row binary representation.
Since \(\FF^2\) has three nonzero vectors and \(r(M)=2\),
there are two or three such classes.
A basis consists of two elements from distinct parallel classes.
The union of all but one of the parallel classes is a cocircuit.
Cosimplicity requires each such union to have at least three elements.

If there are two parallel classes of sizes \(a,b\), then \(a,b\ge3\) and \(t=a+b\ge 6\). Hence,
\[
 b(M)=ab=3(t-3)+(a-3)(b-3)\ge3(t-3)\ge9.
\]
If there are three parallel classes instead, order their sizes as \(1\le a\le b\le c\). The cocircuit condition implies \(a+b\ge3\), which forces \(b,c\ge2\). With \(t=a+b+c\ge 5\) and \(b(M)=ab+ac+bc\), we have
\[
 b(M)-3t+7=(a-1)(b+c-3)+(b-2)(c-2)\ge0,
\]
proving \(\mathrm{(ii)}\). Since \(t\ge5\), we also have \(b(M)\ge3t-7\ge\max\{8,3(t-3)\}\), completing \(\mathrm{(i)}\). Finally, if \(a,b,c\ge2\), then
\[
 b(M)-4t+12=(a-2)(b-2)+(a-2)(c-2)+(b-2)(c-2)\ge0,
\]
which proves \(\mathrm{(iii)}\).
\end{proof}

For the local counts below, fix a simple binary matroid \(M\)
with \(r(M)\ge4\) and a basis \(B\) of \(M\).
To bound \(b(M/C)\) for circuits of sizes \(r(M)-1\) and \(r(M)\),
we describe each fundamental circuit by the elements it omits from \(B\).

Each fundamental circuit contains exactly one element outside \(B\).
An \(r(M)\)-element fundamental circuit therefore contains
\(r(M)-1\) elements of \(B\) and omits exactly one element \(x\).
By \eqref{eq:circuit-column-sum}, its element outside \(B\) is
\(\sigma_B+x\). Conversely, whenever \(\sigma_B+x\in E(M)\),
its nonzero \(B\)-coordinates are indexed by \(B-x\). Since
\(r(M)\ge4\), this element lies outside \(B\) and determines the
fundamental circuit \((B-x)\cup\{\sigma_B+x\}\).
Thus, the set
\[
 X_B\coloneqq\{x\in B:\sigma_B+x\in E(M)\}
\]
indexes the \(r(M)\)-element fundamental circuits, with
\(x\in X_B\) corresponding to
\[
 C_x\coloneqq(B-x)\cup\{\sigma_B+x\}.
\]
Similarly, an \((r(M)-1)\)-element fundamental circuit contains
\(r(M)-2\) elements of \(B\) and omits a pair \(\{x,y\}\).
Its element outside \(B\) is \(\sigma_B+x+y\), whose nonzero
\(B\)-coordinates are indexed by \(B-\{x,y\}\). Conversely,
any such column in \(E(M)\) lies outside \(B\) and determines
the corresponding fundamental circuit.
Let \(G_B\) be the simple graph on \(B\) in which distinct \(x,y\) are adjacent
if and only if \(\sigma_B+x+y\in E(M)\). For an edge \(xy\), the circuit is
\[
 C_{xy}\coloneqq(B-\{x,y\})\cup\{\sigma_B+x+y\}.
\]
Consequently,
\begin{equation}\label{eq:graph-counts}
 |X_B|=\dloc{r(M)},\qquad |E(G_B)|=\dloc{r(M)-1}.
\end{equation}

Write \(\delta_B(x)\) for the degree of \(x\) in \(G_B\).
Thus, \(\delta_B(x)\) counts the \((r(M)-1)\)-element fundamental circuits omitting \(x\).
For \(x\in X_B\), the set \(B-x\) is a basis of \(M|C_x\).
Contraction by \(C_x\) quotients the representation by the span of
\(B-x\) and removes the elements of \(C_x\). The resulting rank-one
representation retains only the \(x\)-coordinate of each column outside \(C_x\).
Every element with \(x\)-coordinate \(1\) is a nonloop of \(M/C_x\)
and hence forms a basis on its own.

To count these nonloops, select \(x\), the elements \(\sigma_B+z\) with \(z\in X_B-x\),
and the elements \(\sigma_B+e+f\) with \(ef\in E(G_B)\) and
\(x\notin\{e,f\}\). Each selected column has \(x\)-coordinate \(1\).
The columns in the three families have, respectively, \(1\), \(r(M)-1\), and
\(r(M)-2\) nonzero \(B\)-coordinates. Since \(r(M)\ge4\), the
families are disjoint. Their elements are distinct within each family,
and none belongs to \(C_x\).
The selected elements form distinct singleton bases of \(M/C_x\),
even when their columns in the contraction coincide. The three families contain
\(1\), \(|X_B|-1\), and \(|E(G_B)|-\delta_B(x)\) elements,
respectively. These counts, together with \eqref{eq:graph-counts} and
\(\delta_B(x)\le |E(G_B)|\), imply
\begin{align}\label{eq:rank-one-selected}
 b(M/C_x)&\ge 1+(|X_B|-1)+(|E(G_B)|-\delta_B(x))\notag\\
 &=\dloc{r(M)-1}+\dloc{r(M)}-\delta_B(x)
 \ge\dloc{r(M)}.
\end{align}
If \(X_B\ne\varnothing\), \eqref{eq:local-weight},
\eqref{eq:graph-counts}, and \eqref{eq:rank-one-selected} give
\begin{equation}\label{eq:general-rank-one-weight}
 \wloc{r(M)}=\sum_{x\in X_B}\frac1{b(M/C_x)}
 \le\sum_{x\in X_B}\frac1{|X_B|}=1.
\end{equation}
The same bound holds when \(X_B=\varnothing\), since the sum is zero.

Cosimplicity provides a second lower bound.
For \(x\in X_B\), the closure of \(C_x\) is a hyperplane,
since \(r(M|C_x)=r(M)-1\).
Its complement \(E(M)-\cl_M(C_x)\) is a cocircuit and consists
precisely of the nonloops of \(M/C_x\).
If \(M\) is cosimple, every cocircuit has at least three elements, so
\[
 b(M/C_x)=|E(M)-\cl_M(C_x)|\ge3.
\]

For the \((r(M)-1)\)-element circuits, we need both a nonloop count
and information on the parallel classes of the rank-two contraction.
Fix \(xy\in E(G_B)\).
The set \(B-\{x,y\}\) is a basis
of \(M|C_{xy}\), so \(\{x,y\}\) is a basis of \(M/C_{xy}\)
by \eqref{eq:contraction-bases}.
Use a representation with column \((1,0)\) for \(x\)
and column \((0,1)\) for \(y\). Since \(C_{xy}\) is spanned
by \(B-\{x,y\}\), contraction retains just the \(x\)- and
\(y\)-coordinates of each column outside \(C_{xy}\).
Select \(x,y\), the columns \(\sigma_B+z\) with \(z\in X_B\),
and the columns \(\sigma_B+e+f\) with \(ef\in E(G_B)-xy\).
As in the rank-one case, the selected elements are distinct.
Each has a nonzero \(x\)- or \(y\)-coordinate, giving
\(\dloc{r(M)-1}+\dloc{r(M)}+1\) nonloops of \(M/C_{xy}\).
Write \(\ind_{X_B}\) for the indicator of \(X_B\).
The selected elements fall into the following parallel classes.
The counts need not include every element of a class.
\begin{equation}\label{eq:selected-classes}
\begin{array}{c|l}
 \text{Column in }M/C_{xy} & \text{Number of selected elements}\\
 \hline
 (0,1) & \delta_B(x)+\ind_{X_B}(x)\\
 (1,0) & \delta_B(y)+\ind_{X_B}(y)\\
 (1,1) & |X_B-\{x,y\}|+|E(G_B[B-\{x,y\}])|
\end{array}
\end{equation}
For example, the selected elements with column \((0,1)\) are \(y\),
the elements \(\sigma_B+x+z\) with \(xz\in E(G_B)\) and \(z\ne y\),
and \(\sigma_B+x\) when \(x\in X_B\).
The elements in the third row are \(\sigma_B+z\) for
\(z\in X_B-\{x,y\}\) and \(\sigma_B+e+f\) for edges \(ef\)
disjoint from \(xy\).
Since \(xy\in E(G_B)\), the first two numbers are positive.
The third is at least \(|X_B|-2\), so all three are positive if
\(\dloc{r(M)}\ge3\). If \(X_B=B\), the first two are at least two,
and the third is at least \(r(M)-2\ge2\).
Elements with the same nonzero column are parallel, and any two
selected elements from different parallel classes form a basis
of \(M/C_{xy}\).
Any additional nonloops can only increase the number of bases.

For cosimple \(M\), the preceding counts bound the two weights
separately. Keeping their common dependence on \(\dloc{r(M)}\)
also yields the joint bound in part \(\mathrm{(iv)}\) below.
\begin{lemma}\label{lem:local}
For every basis \(B\) of a simple cosimple binary matroid \(M\)
with \(r(M)\ge4\), the following hold.
\begin{enumerate}
\item[\(\mathrm{(i)}\)] \(\wloc{r(M)}\le\min\{1,\dloc{r(M)}/3\}\).
\item[\(\mathrm{(ii)}\)]\(\wloc{r(M)-1}\le\max\{1/3,7/12-\dloc{r(M)}/8\}\).
\item[\(\mathrm{(iii)}\)] If \(\dloc{r(M)}>0\), then
\[
 \wloc{r(M)}\le
 \frac{\dloc{r(M)}^2+\sum_{x\in X_B}\delta_B(x)}
 {\dloc{r(M)}(\dloc{r(M)-1}+\dloc{r(M)})},
\]
where
\[
 \sum_{x\in X_B}\delta_B(x)
 \le\min\Bigl\{2\dloc{r(M)-1},\,
 \dloc{r(M)-1}+\binom{\dloc{r(M)}}2\Bigr\}.
\]
\item[\(\mathrm{(iv)}\)] If \(\dloc{r(M)}\ge2\), then
\[
 \wloc{r(M)}+3\wloc{r(M)-1}
 \le1+\frac2{\dloc{r(M)}}.
\]
\end{enumerate}
\end{lemma}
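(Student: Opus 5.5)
Write \(k\coloneqq\dloc{r(M)}=|X_B|\) and \(m\coloneqq\dloc{r(M)-1}=|E(G_B)|\), using \eqref{eq:graph-counts}. Every part comes from bounding each summand of \eqref{eq:local-weight} by the contraction estimates already established and then summing. Throughout, \(M/C_x\) and \(M/C_{xy}\) are cosimple, so \autoref{lem:rank-two} applies to the rank-two contractions.

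For \(\mathrm{(i)}\), the bound \(\wloc{r(M)}\le1\) is \eqref{eq:general-rank-one-weight}. Cosimplicity gives \(b(M/C_x)\ge3\) for each \(x\in X_B\), so each of the \(k\) summands is at most \(1/3\), which gives \(k/3\).

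For \(\mathrm{(ii)}\), each edge \(xy\) gives a contraction \(M/C_{xy}\) with at least \(t\ge m+k+1\) nonloops. If \(k\ge3\), table \eqref{eq:selected-classes} shows there are three nonempty parallel classes. Then \autoref{lem:rank-two}(ii) gives \(b(M/C_{xy})\ge3(m+k+1)-7\ge3m+5\), so \(\wloc{r(M)-1}\le m/(3m+5)<1/3\). If \(k\le2\), use \autoref{lem:rank-two}(i) instead: \(b\ge\max\{8,3(m+k-2)\}\), so \(\wloc{r(M)-1}\le m/\max\{8,3(m+k-2)\}\). Checking \(k=0,1,2\) separately, the right side is at most \(5/9\), \(4/9\), and \(1/3\), respectively; the worst cases are \(m=5\), \(m=4\), and any \(m\). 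These values are at most \(7/12-k/8\), namely \(7/12\), \(11/24\), and \(1/3\).

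For \(\mathrm{(iii)}\), put \(N\coloneqq m+k\). By \eqref{eq:rank-one-selected}, \(b(M/C_x)\ge N-\delta_B(x)\). The elementary inequality
\[
 \frac1{N-\delta}\le\frac{k+\delta}{kN}
\]
is equivalent to \(\delta(N-k-\delta)=\delta(m-\delta)\ge0\), and this holds because \(\delta_B(x)\le m\). Summing over \(x\in X_B\) gives the displayed bound. For the bound on \(\sum_{x\in X_B}\delta_B(x)\), note that this sum equals \(2e(G_B[X_B])+e(X_B,B-X_B)\). It is at most the total degree \(2m\). It is also at most \(m+e(G_B[X_B])\le m+\binom k2\).

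For \(\mathrm{(iv)}\), split into \(k=2\) and \(k\ge3\).

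If \(k=2\), we have \(\wloc{r(M)}\le1\), and the \(k=2\) case of (ii) gives \(3\wloc{r(M)-1}\le1\). Hence the total is at most \(2=1+2/k\).

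If \(k\ge3\), combine (iii) with the three-class per-edge bound \(b(M/C_{xy})\ge3N-4\). This gives
\[
 \wloc{r(M)}+3\wloc{r(M)-1}\le
 \frac{k^2+\min\{2m,m+\binom k2\}}{kN}+\frac{3m}{3N-4},
\]
and it remains to show the right side is at most \(1+2/k\). As \(m\to\infty\) the two terms tend to \(2/k\) and \(1\). For small \(m\) the first term is near \(1\) and the second is small, so the inequality is plausible throughout. I would verify it by clearing denominators and splitting on which term of the minimum is active, that is, on \(m\le\binom k2\) versus \(m>\binom k2\).

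I expect this rational inequality to be the main obstacle, because it may fail narrowly in an intermediate range of \(m\) for small \(k\). In that event I would sharpen the per-edge bound using the actual class sizes from \eqref{eq:selected-classes} instead of the crude count \(t\). In particular, when \(X_B=B\), or whenever all three classes have at least two elements, \autoref{lem:rank-two}(iii) improves the per-edge bound to \(b(M/C_{xy})\ge4t-12\). Alternatively, I would trade the \(\delta_B(x)\) terms between the two sums: large degrees raise the \((0,1)\) and \((1,0)\) class sizes, and so reduce \(\wloc{r(M)-1}\).
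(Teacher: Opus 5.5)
Your proposal follows the paper's route and is correct. The one step you left open in (iv) does hold, and nothing stronger is needed.

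For (i)--(iii) your argument matches the paper's. Your inequality $1/(N-\delta)\le(k+\delta)/(kN)$ is the paper's two steps $N/b\le1+\delta/b\le1+\delta/k$ folded into one. In (ii), your case check over $k\in\{0,1,2\}$ plus the three-class bound for $k\ge3$ is a valid replacement for the paper's one-line split $m=(m+k-2)+(2-k)$ inside $m/\max\{8,3(m+k-2)\}$.

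The unfinished point is (iv) for $k\ge3$, and the feared failure in an intermediate range of $m$ does not occur. The $2m$ branch of the minimum already suffices, so no split on the minimum is needed. Indeed
\[
 1+\frac2k-\frac{k^2+2m}{kN}=\frac{(k+2)(m+k)-k^2-2m}{kN}=\frac{m+2}{N},
\]
so you need $3m/(3N-4)\le(m+2)/N$. After clearing the positive denominators this becomes $2m+6k-8\ge0$, which holds for $k\ge3$. So neither \autoref{lem:rank-two}(iii) nor trading degrees between the two sums is required.

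The paper does not separate $k=2$ from $k\ge3$. It handles $m=0$ by (i). Otherwise it uses only \autoref{lem:rank-two}(i), in the form $3\wloc{r(M)-1}\le m/(m+k-2)\le(m+2)/(m+k)$, where the last step reduces to $2(k-2)\ge0$. Added to the (iii) bound with $2m$, this gives exactly $1+2/k$ for every $k\ge2$.
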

\begin{proof}
For \(\mathrm{(i)}\), each \(x\in X_B\) contributes
\(1/b(M/C_x)\) to \(\wloc{r(M)}\).
By \eqref{eq:rank-one-selected} and cosimplicity,
\(b(M/C_x)\ge\max\{3,\dloc{r(M)}\}\). Hence,
\begin{align*}
 \wloc{r(M)}&=\sum_{x\in X_B}\frac1{b(M/C_x)}
 \le\sum_{x\in X_B}\frac1{\max\{3,\dloc{r(M)}\}}\\
 &=\frac{\dloc{r(M)}}{\max\{3,\dloc{r(M)}\}}
 =\min\{1,\dloc{r(M)}/3\}.
\end{align*}

For each \(xy\in E(G_B)\), the contraction \(M/C_{xy}\)
is cosimple and binary of rank two. By \eqref{eq:selected-classes},
it has at least \(\dloc{r(M)-1}+\dloc{r(M)}+1\) nonloops.
Applying \autoref{lem:rank-two}\(\mathrm{(i)}\) to these contractions
bounds the sum in \eqref{eq:local-weight} as follows:
\begin{equation}\label{eq:rank-two-selected}
 \wloc{r(M)-1}=\sum_{xy\in E(G_B)}\frac1{b(M/C_{xy})}\le\frac{\dloc{r(M)-1}}{\max\{8,3(\dloc{r(M)-1}+\dloc{r(M)}-2)\}}.
\end{equation}
To remove the dependence on \(\dloc{r(M)-1}\), write
the numerator in \eqref{eq:rank-two-selected} as
\[
 \dloc{r(M)-1}
 =(\dloc{r(M)-1}+\dloc{r(M)}-2)+(2-\dloc{r(M)}).
\]
After division by the denominator in \eqref{eq:rank-two-selected},
the two terms contribute at most \(1/3\) and
\(\max\{0,2-\dloc{r(M)}\}/8\), respectively. Thus,
\[
 \wloc{r(M)-1}\le\frac13+\frac{\max\{0,2-\dloc{r(M)}\}}8
 =\max\Bigl\{\frac13,\frac7{12}-\frac{\dloc{r(M)}}8\Bigr\},
\]
which proves \(\mathrm{(ii)}\).

For \(\mathrm{(iii)}\), suppose \(\dloc{r(M)}>0\).
To retain the dependence on the graph degrees, use
\eqref{eq:rank-one-selected} in \eqref{eq:local-weight}:
\begin{align*}
 (\dloc{r(M)-1}+\dloc{r(M)})\wloc{r(M)}&=\sum_{x\in X_B}\frac{\dloc{r(M)-1}+\dloc{r(M)}}{b(M/C_x)}\\
 &\le\sum_{x\in X_B}\Bigl(1+\frac{\delta_B(x)}{b(M/C_x)}\Bigr)\\
 &\le\dloc{r(M)}+\frac{\sum_{x\in X_B}\delta_B(x)}{\dloc{r(M)}}.
\end{align*}
The last step uses \(b(M/C_x)\ge\dloc{r(M)}\) from
\eqref{eq:rank-one-selected}.
Here \(|X_B|=\dloc{r(M)}\) by \eqref{eq:graph-counts}.
Dividing by \(\dloc{r(M)-1}+\dloc{r(M)}\) proves the first inequality in
\(\mathrm{(iii)}\).
Each edge of \(G_B\) contributes at most two to the degree sum.
Therefore, \(\sum_{x\in X_B}\delta_B(x)\le2|E(G_B)|=2\dloc{r(M)-1}\).
For the other bound, count every edge once and each edge with both
ends in \(X_B\) once more. At most
\(\binom{|X_B|}2=\binom{\dloc{r(M)}}2\) edges receive this extra count.

Finally, for \(\mathrm{(iv)}\), the case \(\dloc{r(M)-1}=0\)
follows from \(\mathrm{(i)}\). Otherwise, the denominators below
are positive, since \(\dloc{r(M)-1}>0\) and \(\dloc{r(M)}\ge2\).
To add the bounds from \(\mathrm{(iii)}\) and
\eqref{eq:rank-two-selected}, we put both in a form with the common
factor \(\dloc{r(M)-1}+\dloc{r(M)}\) in the denominator:
\[
 \wloc{r(M)}\le\frac{\dloc{r(M)}^2+2\dloc{r(M)-1}}
 {\dloc{r(M)}(\dloc{r(M)-1}+\dloc{r(M)})},
\]
\[
 3\wloc{r(M)-1}\le\frac{\dloc{r(M)-1}}{\dloc{r(M)-1}+\dloc{r(M)}-2}\le\frac{\dloc{r(M)-1}+2}{\dloc{r(M)-1}+\dloc{r(M)}}.
\]
Clearing the positive denominators reduces the last inequality to
\(2(\dloc{r(M)}-2)\ge0\).
The two upper bounds sum to \(1+2/\dloc{r(M)}\), proving \(\mathrm{(iv)}\).
\end{proof}

\subsection{A folded-cube estimate}

We next relate circuits of size \(r(M)\) to spanning circuits
by counting the bases at which both are fundamental.
For a fixed \(r(M)\)-element circuit, these bases correspond to
edges of a folded cube, with two bases for each edge.
We establish this correspondence before estimating the edge count.

For an integer \(\ell\ge4\), the \emph{folded \(\ell\)-cube}, denoted by \(\square_\ell\), is
obtained from the ordinary \((\ell-1)\)-cube \(Q_{\ell-1}\) by adding an edge
between each pair of antipodal vertices. Its vertices are the
vectors in \(\FF^{\ell-1}\): edges of the ordinary \((\ell-1)\)-cube join
vectors differing in exactly one coordinate, and antipodal edges join
vectors differing in every coordinate.
For a graph \(G\) and \(U\subseteq V(G)\), write
\(\varepsilon_G(U)\coloneqq|E(G[U])|\).

For a simple binary matroid \(M\) with \(r(M)\ge4\), fix an
\(r(M)\)-element circuit \(C\) and set \(U_C:=E(M)-\cl_M(C)\).
Since \(r(M|C)=r(M)-1\), the space \(\spanof(C)\) is a hyperplane
in \(\FF^{r(M)}\) and \(U_C\ne\varnothing\).
To obtain cube coordinates on \(U_C\), choose \(e_0\in U_C\)
and write \(C=\{e_1,\ldots,e_{r(M)}\}\).
The columns \(e_1,\ldots,e_{r(M)-1}\) form a basis of \(\spanof(C)\),
and adjoining \(e_0\) gives a basis of \(\FF^{r(M)}\).
Every vector \(v\in\FF^{r(M)}\) has a unique expression
\[
 v=\epsilon_0e_0+\sum_{i=1}^{r(M)-1}\epsilon_i e_i,
 \qquad \epsilon_i\in\FF,\quad 0\le i\le r(M)-1.
\]
In this expression, \(v\in\spanof(C)\) if and only if \(\epsilon_0=0\).
Since \(\FF=\{0,1\}\), every vector outside \(\spanof(C)\) has
\(\epsilon_0=1\). Consequently, the expansion defines a bijection
\(\FF^{r(M)}-\spanof(C)\to\FF^{r(M)-1}\) by
\[
 e_0+\sum_{i=1}^{r(M)-1}\epsilon_i e_i \mapsto (\epsilon_1,\ldots,\epsilon_{r(M)-1}).
\]
On \(\FF^{r(M)}-\spanof(C)\), join two distinct vectors when their sum belongs to \(C\).
In the displayed coordinates, adding \(e_i\), for \(1\le i\le r(M)-1\),
changes only the \(i\)th coordinate.
By \eqref{eq:circuit-column-sum},
\(e_{r(M)}=\sum_{i=1}^{r(M)-1}e_i\), so adding \(e_{r(M)}\) changes every coordinate.
Thus, the above bijection is a graph isomorphism onto \(\square_{r(M)}\),
under which we regard \(U_C\) as a vertex subset of \(\square_{r(M)}\).

\begin{lemma}\label{lem:cube-correspondence}
Let \(M\) be a simple binary matroid with \(r(M)\ge4\) and
\(C\) an \(r(M)\)-element circuit. Then
\[
 \bigl|\{B\in\BB(M):|B\cap C|=r(M)-1,\ \sigma_B\in E(M)\}\bigr|
 =2\varepsilon_{\square_{r(M)}}(U_C).
\]
\end{lemma}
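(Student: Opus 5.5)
We will prove the identity by an explicit two-to-one correspondence between the bases on the left and the edges of \(\square_{r(M)}[U_C]\). First we parametrize the bases in question. Write \(C=\{e_1,\ldots,e_{r(M)}\}\) as above. A basis \(B\) with \(|B\cap C|=r(M)-1\) contains \(C-e_i\) for exactly one index \(i\), because \(C\) itself is dependent. Since \(C-e_i\) spans the hyperplane \(\spanof(C)\), the remaining element \(u\) of \(B\) must lie outside \(\cl_M(C)\), that is, \(u\in U_C\). Conversely, for every \(i\) and every \(u\in U_C\), the set \((C-e_i)\cup\{u\}\) is a basis. Thus these bases are in bijection with the pairs \((i,u)\), \(1\le i\le r(M)\), \(u\in U_C\). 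Injectivity holds because \(i\) is recovered as the index of the unique element of \(C-B\), and \(u\) as the unique element of \(B-C\).

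Next we compute \(\sigma_B\) for \(B=(C-e_i)\cup\{u\}\). By \eqref{eq:circuit-column-sum} we have \(\sum_{e\in C}e=0\), so
\[
 \sigma_B=\sum_{e\in C-e_i}e+u=e_i+u .
\]
This vector lies outside \(\spanof(C)\) because \(u\) does, and it differs from \(u\) because \(e_i\ne0\). Hence \(\sigma_B\in E(M)\) if and only if \(v\coloneqq u+e_i\) is an element of \(U_C\). Moreover, \(u\) and \(v\) are distinct vectors off \(\spanof(C)\) whose sum \(e_i\) belongs to \(C\). By the isomorphism established just before the lemma, they are therefore adjacent in \(\square_{r(M)}\), and \(uv\) is an edge of \(\square_{r(M)}[U_C]\).

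Finally, we check that the resulting map \(B\mapsto\{u,u+e_i\}\) is exactly two-to-one onto \(E(\square_{r(M)}[U_C])\). Take any edge \(uv\) of \(\square_{r(M)}[U_C]\). By definition of adjacency, \(u+v\in C\), and since the columns of \(M\) are distinct, \(u+v=e_i\) for a unique index \(i\). The bases mapping to \(uv\) are then precisely \((C-e_i)\cup\{u\}\) and \((C-e_i)\cup\{v\}\). Both are counted on the left: \(u,v\in U_C\), and each has \(\sigma_B\) equal to the other endpoint, which lies in \(E(M)\). These two bases are distinct since \(u\ne v\). Every counted basis arises in this way by the previous paragraph, so the left side equals \(2\varepsilon_{\square_{r(M)}}(U_C)\).

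The only delicate point is that an edge determines its label \(e_i\) uniquely. This uses simplicity, through the distinctness of the columns, together with \(r(M)\ge4\), which makes the folded cube simple: the cube edges and the antipodal edges have distinct labels, so no two labels produce the same edge. Beyond this, every step is direct bookkeeping.
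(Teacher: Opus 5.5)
Your proof is correct and follows essentially the same route as the paper: each counted basis \((C-e_i)\cup\{u\}\) is sent to the edge \(\{u,\sigma_B\}=\{u,u+e_i\}\) of \(\square_{r(M)}[U_C]\), and each edge \(uv\) has exactly the two preimages \((C-e_i)\cup\{u\}\) and \((C-e_i)\cup\{v\}\), where \(e_i=u+v\). One small correction to your closing remark: uniqueness of the label uses only \(e_i=u+v\) and the distinctness of the columns, not \(r(M)\ge4\).
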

\begin{proof}
We associate each basis counted on the left with an edge of
\(\square_{r(M)}[U_C]\) and show that every edge arises twice.
For such a basis \(B\), write \(C-B=\{z\}\) and \(B-C=\{e\}\). Then the set \(C-z\) is a basis of \(\spanof(C)\).
Since independence of \(B\) forces \(e\notin\spanof(C)\),  \(e\in U_C\).
By \eqref{eq:circuit-column-sum},
\[
 \sigma_B=e+\sum_{x\in C-z}x=e+z.
\]
If \(\sigma_B\in\spanof(C)\), then \(e=\sigma_B+z\in\spanof(C)\),
a contradiction. Therefore, \(\sigma_B\in U_C\), as
\(\sigma_B\in E(M)\) by hypothesis. Moreover, \(z\ne0\) by simplicity,
so the relation \(e+\sigma_B=z\in C\) shows that \(e\) and
\(\sigma_B\) are distinct and adjacent.

Conversely, fix an edge \(\{e,f\}\) and put \(z=e+f\in C\).
Any basis producing this edge omits \(z\) and has \(e\) or \(f\) as its
unique element outside \(C\), so it is one of
\[
 (C-z)\cup\{e\},\qquad (C-z)\cup\{f\}.
\]
Both are bases because \(C-z\) is a basis of \(\spanof(C)\) and
\(e,f\notin\spanof(C)\).
Their column sums are \(f\) and \(e\), respectively, so both are counted
on the left. Since \(e\ne f\) and \(e,f\notin C\), the two bases are distinct.
Therefore, each edge corresponds to exactly two bases.
\end{proof}

We estimate this edge count using an initial segment of the ordinary cube.
For each integer \(t\ge0\), let \(F(t)\) count the edges induced by the first \(t\)
vertices of an ordinary cube with at least \(t\) vertices, ordered
lexicographically by their coordinates, with \(0<1\).
Coordinate reversal recovers the binary ordering of Ellis and Leader without
changing adjacency. Increasing the ambient dimension only adds initial zero
coordinates, so \(F(t)\) is independent of the dimension. Also, \(F(0)=0\).
For \(U\subseteq V(\square_\ell)=\FF^{\ell-1}\), the bound
\(|U|\le2^{\ell-1}\) allows \(F(|U|)\) to be computed in the
ordinary \((\ell-1)\)-cube.
Let \(\overline U\) be the set of antipodes of vertices in \(U\).
Each antipodal edge of \(\square_\ell[U]\) contributes two
vertices to \(U\cap\overline U\). Hence,
\[
 2\varepsilon_{\square_\ell}(U)=2\varepsilon_{Q_{\ell-1}}(U)+|U\cap\overline U|.
\]
In dimension \(\ell-1\), Theorem~3 of \cite{EllisLeader2018} bounds
\(2\varepsilon_{Q_{\ell-1}}(U)+|U\cap\overline U|\) when
\(|U|\le2^{\ell-2}\), and equation~(7) of the same paper gives the bound
when \(|U|\ge2^{\ell-2}\). Both results apply to arbitrary subsets \(U\).
After division by two, the two bounds take the form
\begin{equation}\label{eq:cube-initial}
 \varepsilon_{\square_\ell}(U)
 \le F(|U|)+\max\{0,|U|-2^{\ell-2}\}.
\end{equation}
For the weighted circuit count, we need an explicit bound in \(|U|\)
rather than the initial-segment function \(F\).
The next lemma converts \eqref{eq:cube-initial} into the required quadratic bounds.

\begin{lemma}\label{lem:cube}
For \(\ell\ge4\) and \(U\subseteq V(\square_\ell)\), we have
\[
 \varepsilon_{\square_\ell}(U)\le
 \begin{cases}
 |U|^2/4,&\ell=4,\\
 |U|/2+|U|^2/8,&\ell=5,\\
 |U|+|U|^2/16,&\ell\ge6.
 \end{cases}
\]
\end{lemma}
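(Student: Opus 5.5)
The plan is to derive every case from \eqref{eq:cube-initial}, using an explicit recursion for \(F\). Write \(t=|U|\), so \(0\le t\le2^{\ell-1}\). The key fact about the lexicographic initial segment is the standard splitting identity
\[
 F(2^k+s)=F(2^k)+F(s)+s,\qquad 0\le s\le2^k,
\]
with \(F(2^k)=k2^{k-1}\). To prove it, note that the first \(2^k+s\) vertices of a cube of dimension at least \(k+1\) are: a full \(k\)-subcube, plus a translate of the first \(s\) vertices lying in the adjacent \(k\)-subcube. Each of those \(s\) vertices has exactly one neighbour in the full subcube. From this identity I would tabulate the small values
\[
 F(1),\ldots,F(8)=0,1,2,4,5,7,9,12,
\]
and more generally all values up to \(F(16)=32\).

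The cases \(\ell=4\) and \(\ell=5\) are finite checks, with \(t\le8\) and \(t\le16\) respectively.
\begin{itemize}
\item For \(\ell=4\), verify \(F(t)+\max\{0,t-4\}\le t^2/4\) directly. Equality occurs at \(t=2,4,6,8\); for example, \(F(8)+4=16\).
\item For \(\ell=5\), verify \(F(t)+\max\{0,t-8\}\le t/2+t^2/8\) for \(t\le16\). For \(t>8\), write \(t=8+s\). The left side is then \(12+F(s)+2s\), and the check reduces to the already tabulated values; the endpoint \(t=16\) gives \(40=40\).
\end{itemize}
These are routine arithmetic checks.

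For \(\ell\ge6\) I would first prove, by strong induction on \(t\), the dimension-free bound
\[
 F(t)\le t+t^2/16\qquad(t\ge0).
\]
\begin{itemize}
\item For \(t\le16\), check this from the table; it is tight at \(t=8\) and \(t=16\).
\item For \(t=2^k+s\) with \(k\ge4\) and \(0\le s\le2^k\), the recursion and induction give
\[
 F(t)\le k2^{k-1}+s+s^2/16+s.
\]
Compare this with \(2^k+s+(2^k+s)^2/16\). The difference is
\[
 2^k+2^{2k-4}-k2^{k-1}-s+2^{k-3}s.
\]
Since \(2^{k-3}\ge2\) for \(k\ge4\), this is nondecreasing in \(s\). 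At \(s=0\) it equals \(2^{k-1}(2+2^{k-3}-k)\), which is \(\ge0\) for \(k\ge4\).
\end{itemize}
(The naive estimate \(F(t)\le\tfrac t2\log_2 t\) fails near \(t=12\), so the recursion must be used.)

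The main obstacle is the extra term \(t-2^{\ell-2}\) when \(t>2^{\ell-2}\). With \(m=2^{\ell-2}\ge16\) and \(t=m+s\), \(0<s\le m\), the requirement
\[
 F(t)+t-m\le t+t^2/16
\]
becomes
\[
 (\ell-2)m/2+F(s)+s\le m+(m+s)^2/16.
\]
Using the bound \(F(s)\le s+s^2/16\), it suffices that
\[
 (\ell-2)m/2+2s\le m+m^2/16+ms/8.
\]
This holds because \(ms/8\ge2s\) for \(m\ge16\), and \(m^2/16+m\ge(\ell-2)m/2\), since \(m/16\ge(\ell-4)/2\) holds at \(m=16\), \(\ell=6\) and the left side grows faster thereafter. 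If this slack proves too tight, the fallback is to redo the \(s\)-monotonicity argument directly rather than through the generic bound on \(F(s)\). Combining this case with the induction bound for \(t\le2^{\ell-2}\) gives the case \(\ell\ge6\) of the lemma.
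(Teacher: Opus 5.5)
Your proof is correct, and the numerical claims check out. The table of \(F\) up to \(F(16)=32\) is right, as are the finite checks for \(\ell=4,5\), with equality exactly where you say. The inductive step for \(F(t)\le t+t^2/16\) reduces to \(2^{k-3}\ge k-2\) for \(k\ge4\), and the correction term for \(\ell\ge6\) is absorbed correctly.

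Your route differs from the paper's mainly in how \(F\) is bounded. The paper proves \(F(t)\le (k-1)t/2+t^2/2^{k+1}\) for \(k=1,2,3\) by induction on \(k\):
\begin{itemize}
\item The base \(k=1\) is the bipartite bound \(t^2/4\).
\item The step cuts the first \(t\) vertices into \(a\) full \(k\)-subcubes plus a remainder of \(s<2^k\) vertices. It bounds the edges between blocks crudely, which leaves an explicit slack \(s(2^k-s)/2^{k+1}\).
\item The correction term is then handled for all three cases at once. It uses \(F(2^{\ell-2}+s)=F(2^{\ell-2})+s+F(s)\) together with the quadratic identity for these bounds, and this is where \(\ell\ge k+3\) enters.
\end{itemize}
You instead use the exact binary recursion \(F(2^k+s)=F(2^k)+F(s)+s\) throughout. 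You dispose of \(\ell=4,5\) by finite tables, and for \(\ell\ge6\) you run a strong induction on \(t\) with base range \(t\le16\). Your correction-term step is the same splitting identity as the paper's, except that you use \(F(2^{\ell-2})=(\ell-2)2^{\ell-3}\) exactly rather than bounding it.

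The paper's version is uniform in \(k\) and needs no tables. Yours is more hands-on and makes visible where the bounds are tight, for example \(t=2,4,6,8\) when \(\ell=4\) and \(t=8,16\) for the bound \(F(t)\le t+t^2/16\).

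One small correction to your parenthetical remark. The estimate \(F(t)\le\frac t2\log_2 t\) does not fail; it is the classical edge-isoperimetric bound and holds for all \(t\). It is merely too weak to imply \(t+t^2/16\) at \(t=12\), since \(6\log_2 12\approx21.5>21\).
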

\begin{proof}
We first bound \(F(t)\) by the three quadratic expressions in the statement.
Write these as \(G_k(t)\coloneqq(k-1)t/2+t^2/2^{k+1}\),
where \(k\in\{1,2,3\}\).
The parameter \(k\) is also the dimension of the full subcubes used below.
We prove by induction on \(k\) that
\begin{equation}\label{eq:ordinary-cube}
 F(t)\le G_k(t),\qquad t\in\{0,\ldots,2^{\ell-1}\}.
\end{equation}
For \(k=1\), the subgraph defining \(F(t)\) is bipartite.
Hence, \(F(t)\le t^2/4=G_1(t)\).

Let \(k\in\{2,3\}\), and assume that \eqref{eq:ordinary-cube} holds
with \(k-1\) in place of \(k\) for every integer \(0\le t\le2^{\ell-1}\).
Write \(t=2^k a+s\), where \(a,s\) are integers with \(a\ge0\) and \(0\le s<2^k\).
Since \(\ell-1\ge k\), partitioning the first \(t\) vertices by their
first \(\ell-1-k\) coordinates yields \(a\) full ordinary \(k\)-cubes
and a possibly empty remaining set of \(s\) vertices.
When \(s>0\), deleting the common prefix identifies the remaining set
with the first \(s\) vertices of an ordinary \(k\)-cube in lexicographic order.
The remaining set induces \(F(s)\) edges, with \(F(0)=0\).
Each ordinary \(k\)-cube has \(k2^{k-1}\) edges.
An edge joining distinct sets leaves the last \(k\) coordinates unchanged,
so each vertex has at most one neighbour in any other set.
Thus, at most \(2^k\) edges join any two of these ordinary \(k\)-cubes,
and at most \(s\) edges join each ordinary \(k\)-cube to the remaining set.
Counting edges inside full cubes, between two full cubes, from full cubes
to the remaining set, and inside that set gives
\[
 F(t)\le ak2^{k-1}+2^k\binom a2+as+F(s).
\]
The induction hypothesis applies because \(s<2^k\le2^{\ell-1}\),
so \(F(s)\le G_{k-1}(s)\). With \(t=2^ka+s\),
the preceding estimate simplifies to
\[
 F(t)\le G_k(t)-\frac{s(2^k-s)}{2^{k+1}}\le G_k(t).
\]
This proves \eqref{eq:ordinary-cube}.

Choose \(k=1\) if \(\ell=4\), \(k=2\) if \(\ell=5\), and
\(k=3\) otherwise. Then \(\ell\ge k+3\), and the required bound is
\(\varepsilon_{\square_\ell}(U)\le G_k(|U|)\).
For \(|U|\le2^{\ell-2}\), the correction term in
\eqref{eq:cube-initial} vanishes, so \eqref{eq:ordinary-cube} gives the bound.
However, when \(|U|>2^{\ell-2}\), we must also control the positive
correction term. Write
\(|U|=2^{\ell-2}+s\), where \(1\le s\le2^{\ell-2}\).
In the ordinary \((\ell-1)\)-cube, the first \(2^{\ell-2}\) vertices
are those with first coordinate \(0\) and induce an ordinary \((\ell-2)\)-cube.
The next \(s\) vertices have first coordinate \(1\), induce \(F(s)\) edges,
and each has exactly one neighbour in the first subcube. Hence,
\(F(2^{\ell-2}+s)=F(2^{\ell-2})+s+F(s)\).
Combining \eqref{eq:cube-initial} and \eqref{eq:ordinary-cube}
with \(\ell\ge k+3\) absorbs the correction term:
\[
 \varepsilon_{\square_\ell}(U)\le G_k(2^{\ell-2})+2s+G_k(s)=G_k(|U|)-\bigl(2^{\ell-k-2}-2\bigr)s\le G_k(|U|).\qedhere
\]
\end{proof}

We now combine \autoref{lem:cube-correspondence} with the edge bound
and the weighted count by bases to obtain joint estimates for
circuits of sizes \(r(M)\) and \(r(M)+1\).

\begin{lemma}\label{lem:cube-count}
Let \(M\) be a simple binary matroid with \(r(M)\ge4\).
Then the following hold.
\begin{enumerate}
\item[\(\mathrm{(i)}\)] If \(r(M)=4\), then
\[
 4d_4(M)+5d_5(M)
 \le b(M)+\frac18\sum_{B\in\BB(M)}\dloc4.
\]
\item[\(\mathrm{(ii)}\)] If \(r(M)=5\), then
\[
 4d_5(M)+6d_6(M)
 \le b(M)+\frac1{20}\sum_{B\in\BB(M)}\dloc5.
\]
\item[\(\mathrm{(iii)}\)] If \(r(M)\ge6\), then
\[
 (r(M)-2)d_{r(M)}(M)+(r(M)+1)d_{r(M)+1}(M)
 \le b(M)+\frac1{8r(M)}\sum_{B\in\BB(M)}\dloc{r(M)}.
\]
\end{enumerate}
\end{lemma}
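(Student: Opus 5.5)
The plan is to count the two circuit sizes jointly through the weighted identity \eqref{eq:weight-total} and the spanning-basis identity \eqref{eq:spanning-basis-count}. Throughout, write \(r=r(M)\). Summing these two identities gives
\[
 r\,d_r(M)+(r+1)d_{r+1}(M)=\sum_{B\in\BB(M)}\bigl(\wloc{r}+\ind[\sigma_B\in E(M)]\bigr).
\]
I will then subtract \(b(M)=\sum_B 1\) and split the bases into two groups according to whether \(\sigma_B\in E(M)\).

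\emph{Step 1: the two groups of bases.} If \(\sigma_B\notin E(M)\), the summand minus one is \(\wloc{r}-1\le 0\) by \eqref{eq:general-rank-one-weight}; this uses only simplicity and \(r\ge4\). If \(\sigma_B\in E(M)\), the summand minus one is exactly \(\wloc{r}\). Hence
\[
 r\,d_r(M)+(r+1)d_{r+1}(M)-b(M)\le\sum_{B:\,\sigma_B\in E(M)}\wloc{r}.
\]

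\emph{Step 2: rewrite the right-hand side circuit by circuit.} Exchange the order of summation, so that each \(r\)-element circuit \(C\) contributes \(1/b(M/C)\) for every basis \(B\) at which \(C\) is fundamental and \(\sigma_B\in E(M)\). By \autoref{lem:cube-correspondence}, the number of such bases is \(2\varepsilon_{\square_r}(U_C)\). Moreover, \(M/C\) has rank one, and its nonloops are exactly the elements of \(U_C=E(M)-\cl_M(C)\), so \(b(M/C)=|U_C|\). The right-hand side therefore becomes
\[
 \sum_{C}\frac{2\varepsilon_{\square_r}(U_C)}{|U_C|},
\]
where the sum runs over the \(r\)-element circuits \(C\).

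\emph{Step 3: apply the edge bound of \autoref{lem:cube}.} The three cases give the following per-circuit bounds:
\begin{itemize}
\item for \(r=4\), \(2\varepsilon/|U_C|\le|U_C|/2\);
\item for \(r=5\), \(2\varepsilon/|U_C|\le 1+|U_C|/4\);
\item for \(r\ge6\), \(2\varepsilon/|U_C|\le 2+|U_C|/8\).
\end{itemize}
Summing over circuits, the constant terms contribute \(0\), \(d_5(M)\), and \(2d_r(M)\), respectively. Moving these to the left turns the coefficient \(r\) into \(4\), \(4\), and \(r-2\), matching parts (i)–(iii).

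\emph{Step 4: convert the remaining terms.} The leftover terms are multiples of \(\sum_C|U_C|=\sum_C b(M/C)\). By \eqref{eq:count-total}, this sum equals \(\frac1r\sum_B\dloc{r}\). This yields the coefficients \(\frac1{2\cdot4}=\frac18\), \(\frac1{4\cdot5}=\frac1{20}\), and \(\frac1{8r}\), as stated.

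The main obstacle is the bookkeeping in Step 2. It requires the identification \(b(M/C)=|U_C|\), which holds because every element outside the hyperplane \(\cl_M(C)\) is a singleton basis of the rank-one contraction. It also requires checking that the bases counted in \autoref{lem:cube-correspondence} are exactly those at which \(C\) is fundamental and \(\sigma_B\in E(M)\). Once these are in place, the rest is the arithmetic above.
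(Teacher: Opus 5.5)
Your proposal is correct and follows the paper's proof essentially step for step. Like the paper, you bound \(\wloc{r(M)}\le1\) on bases with \(\sigma_B\notin E(M)\) and use \eqref{eq:spanning-basis-count} to reach \(r(M)d_{r(M)}(M)+(r(M)+1)d_{r(M)+1}(M)\le b(M)+\sum_{B:\,\sigma_B\in E(M)}\wloc{r(M)}\); you then rewrite that sum through \autoref{lem:cube-correspondence} and \(b(M/C)=|U_C|\), apply \autoref{lem:cube}, and convert the linear terms with \eqref{eq:count-total}, where adding the indicator of \(\sigma_B\in E(M)\) before subtracting \(b(M)\) is only a cosmetic rearrangement of the paper's inequality \eqref{eq:basis}.
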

\begin{proof}
We split the total weight according to whether \(\sigma_B\in E(M)\).
For bases outside spanning circuits, the bound \(\wloc{r(M)}\le1\)
from \eqref{eq:general-rank-one-weight} suffices.
By \eqref{eq:spanning-basis-count},
\begin{equation}\label{eq:basis}
 \sum_{B\in\BB(M),\,\sigma_B\notin E(M)}\wloc{r(M)}
 \le b(M)-(r(M)+1)d_{r(M)+1}(M).
\end{equation}
For each \(r(M)\)-element circuit \(C\), put
\(U_C=E(M)-\cl_M(C)\), as in \autoref{lem:cube-correspondence}.
The nonloops of the rank-one contraction \(M/C\) are precisely
the elements of \(U_C\). Hence,
\begin{equation}\label{eq:outside-hyperplane}
 |U_C|=b(M/C).
\end{equation}
For the remaining bases, \autoref{lem:cube-correspondence}
counts the occurrences of each \(r(M)\)-element circuit \(C\)
as a fundamental circuit.
Each such occurrence has weight \(1/b(M/C)\) by \eqref{eq:local-weight}.
Summing over all \(r(M)\)-element circuits gives
\begin{equation}\label{eq:cube-weight-count}
 \sum_{B\in\BB(M),\,\sigma_B\in E(M)}\wloc{r(M)}
 =2\sum_{C\in\CC(M),\,|C|=r(M)}
 \frac{\varepsilon_{\square_{r(M)}}(U_C)}{b(M/C)}.
\end{equation}
By \eqref{eq:weight-total}, we have
\[
 r(M)d_{r(M)}(M)
 =\sum_{B\in\BB(M),\,\sigma_B\notin E(M)}\wloc{r(M)}
 +\sum_{B\in\BB(M),\,\sigma_B\in E(M)}\wloc{r(M)}.
\]
Substitute \eqref{eq:basis} and \eqref{eq:cube-weight-count} into this
decomposition and move the spanning-circuit term to the left:
\begin{equation}\label{eq:cube-master}
 r(M)d_{r(M)}(M)+(r(M)+1)d_{r(M)+1}(M)
 \le b(M)+2\sum_{C\in\CC(M),\,|C|=r(M)}
 \frac{\varepsilon_{\square_{r(M)}}(U_C)}{b(M/C)}.
\end{equation}
By \eqref{eq:outside-hyperplane}, dividing the quadratic edge bounds
in \autoref{lem:cube}, with \(\ell=r(M)\), by \(b(M/C)=|U_C|\)
leaves only constant and linear terms in \(b(M/C)\):
\[
 \frac{2\varepsilon_{\square_{r(M)}}(U_C)}{b(M/C)}
 \le\begin{cases}
 b(M/C)/2,&r(M)=4,\\
 1+b(M/C)/4,&r(M)=5,\\
 2+b(M/C)/8,&r(M)\ge6.
 \end{cases}
\]
The linear terms can now be summed by \eqref{eq:count-total}:
\[
 \sum_{C\in\CC(M),\,|C|=r(M)}b(M/C)
 =\frac1{r(M)}\sum_{B\in\BB(M)}\dloc{r(M)}.
\]
Substituting this identity and the preceding bounds in
\eqref{eq:cube-master} proves the three assertions.
\end{proof}

\subsection{The joint bounds}

We now use \eqref{eq:weight-total} to combine the circuit counts
with the sums over bases in \autoref{lem:cube-count}.
The resulting contribution at each basis is bounded by
\autoref{lem:local}.
In rank four, every circuit has size three, four, or five,
so these estimates bound \(d(M)\) directly.

\begin{proposition}\label{prop:rank-four}
Let \(M\) be a simple cosimple binary matroid of rank four. Then
\[
 d(M)<\frac25 b(M).
\]
\end{proposition}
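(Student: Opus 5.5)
We prove the proposition by combining the weighted identity \eqref{eq:weight-total} with \autoref{lem:cube-count}(i) and the spanning-circuit count, which together reduce it to a bound at a single basis. In rank four every circuit has size \(3\), \(4\) or \(5\), so \(d(M)=d_3(M)+d_4(M)+d_5(M)\). By \eqref{eq:weight-total}, \(d_3(M)=\frac13\sum_B\wloc3\) and \(d_4(M)=\frac14\sum_B\wloc4\). For the sizes \(4\) and \(5\) we use two global inequalities:
\[
 d_4(M)+\tfrac54d_5(M)\le\tfrac14b(M)+\tfrac1{32}\sum_{B}\dloc4,
\]
which is \autoref{lem:cube-count}(i) divided by \(4\), and \(5d_5(M)\le b(M)\), which follows from \eqref{eq:spanning-basis-count}.

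Fix a parameter \(\alpha\in[0,4/5]\). We write \(d_4+d_5\) as \(\alpha\) times the cube inequality plus \((1-\alpha)\) times the identity \(d_4=\frac14\sum_B\wloc4\). The leftover term is \((1-\frac{5\alpha}4)d_5\), whose coefficient is nonnegative, so we bound it by \(\frac15(1-\frac{5\alpha}4)b(M)\). This gives
\[
 d(M)\le\frac15b(M)+\sum_{B\in\BB(M)}\Phi_\alpha(B),
\]
where
\[
 \Phi_\alpha(B)=\frac{\wloc3}{3}+\frac{(1-\alpha)\wloc4}{4}+\frac{\alpha\,\dloc4}{32}.
\]
It therefore suffices to find an \(\alpha\) with \(\Phi_\alpha(B)<\frac15\) for every basis \(B\). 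Summing over the \(b(M)\) bases then gives \(d(M)<\frac25b(M)\) strictly, since \(b(M)>0\).

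The per-basis bound is a finite case analysis on \(k=\dloc4\). Since \(X_B\subseteq B\) and \(|B|=4\), we have \(k\in\{0,1,2,3,4\}\).
\begin{itemize}
\item[] For \(k=0\): \autoref{lem:local}(ii) gives \(\wloc3\le7/12\) and \(\wloc4=0\), so \(\Phi_\alpha\le7/36<1/5\) for every \(\alpha\).
\item[] For \(k=1\): \autoref{lem:local}(i),(ii) give \(\wloc4\le1/3\) and \(\wloc3\le11/24\). Hence \(\Phi_\alpha\le\frac{11}{72}+\frac{1-\alpha}{12}+\frac{\alpha}{32}\), which is below \(1/5\) as soon as \(\alpha\) is not too small.
\item[] For \(k\ge2\): \autoref{lem:local}(iv) gives \(\wloc4+3\wloc3\le1+2/k\). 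We also have \(\wloc3\le1/3\) from (ii) and \(\wloc4\le\min\{1,k/3\}\) from (i). Since \(\Phi_\alpha\) is linear in \((\wloc3,\wloc4)\), we maximize it over this small polygon, checking the vertices for each \(k\in\{2,3,4\}\).
\end{itemize}

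The main obstacle is choosing a single \(\alpha\) that works for all \(k\) at once. Small \(k\) favours large \(\alpha\), because it suppresses the \(\wloc4\) term. Large \(k\) (\(k=4\), where \(\alpha k/32=\alpha/8\)) penalizes large \(\alpha\). My first try is \(\alpha=4/5\), which eliminates the \(d_5\) term entirely, and then I would check \(k=3,4\) using (iv). If some case fails, I would lower \(\alpha\), or refine the \(k\ge2\) bound using \autoref{lem:local}(iii) with \(\sum_{x\in X_B}\delta_B(x)\le\dloc3+\binom k2\) together with the sharper form of \eqref{eq:rank-two-selected}. Strictness comes from each case being strictly below \(1/5\).
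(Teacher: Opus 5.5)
Your reduction is correct, and at $\alpha=4/5$ it coincides with the paper's: $\Phi_{4/5}(B)<1/5$ is exactly \eqref{eq:rank-four-local-target}. Your cases $k\le2$ also go through, each giving $\Phi_{4/5}\le 7/36$. The gap is at $k=\dloc4\in\{3,4\}$, and neither of your fallbacks closes it.

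Lowering $\alpha$ only makes things worse. For fixed weights, $\Phi_\alpha$ decreases in $\alpha$ whenever $\wloc4>k/8$, and this holds at the extremal configurations. For example, with $k=3$ the point $(\wloc3,\wloc4)=(1/3,2/3)$ satisfies \autoref{lem:local}(i), (ii), (iv) and gives $\Phi_\alpha=5/18-7\alpha/96$. This is below $1/5$ only if $\alpha>16/15$.

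Adding \autoref{lem:local}(iii) and the explicit bound \eqref{eq:rank-two-selected} does not rescue it either. The reason is that \eqref{eq:rank-two-selected} rests only on \autoref{lem:rank-two}(i), that is, $b\ge 3(t-3)$.
\begin{itemize}
\item For $k=3$ and $\dloc3=6$, these tools give $\wloc3\le 2/7$ and $\wloc4\le 2/3$. The best they certify is $\Phi_{4/5}\le 2/21+1/30+3/40=171/840>1/5$.
\item For $k=4$ and $\dloc3=6$, they give $\wloc3\le 1/4$ and $\wloc4\le 7/10$, so $\Phi_{4/5}\le 131/600>1/5$.
\end{itemize}
In both cases any smaller $\alpha$ is worse. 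So choosing $\alpha$ is not the real obstacle; the strength of the rank-two contraction bound is.

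The missing idea is to use the parallel-class multiplicities in \eqref{eq:selected-classes}, which strengthen the lower bound on $b(M/C_{xy})$.
\begin{itemize}
\item When $k=3$, all three parallel classes of each $M/C_{xy}$ are nonempty, and there are at least $\dloc3+4$ nonloops. So \autoref{lem:rank-two}(ii) gives $b(M/C_{xy})\ge 3\dloc3+5$, hence $\wloc3\le \dloc3/(3\dloc3+5)$. Combine this with \autoref{lem:local}(iii) and $\sum_{x\in X_B}\delta_B(x)\le\dloc3+3$, which give $\wloc4\le 1/3+3/(\dloc3+3)$. Together these yield $\Phi_{4/5}<1/5$ for all $0\le\dloc3\le 6$.
\item When $k=4$, we have $X_B=B$, every class contains at least two selected elements, and there are at least $\dloc3+5$ nonloops. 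So \autoref{lem:rank-two}(iii) gives $b(M/C_{xy})\ge 4\dloc3+8$. Part (ii) alone still fails at $\dloc3=6$. Moreover, $\sum_{x\in X_B}\delta_B(x)=2\dloc3$ exactly, so $\wloc4\le 1-\dloc3/(2(\dloc3+4))$.
\end{itemize}
With these two refinements and $\alpha=4/5$, your plan becomes the paper's proof.
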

\begin{proof}
Since \(d(M)=d_3(M)+d_4(M)+d_5(M)\),
\autoref{lem:cube-count}\(\mathrm{(i)}\) and \eqref{eq:weight-total} give
\begin{align}
 d(M)&\le\frac15b(M)+d_3(M)+\frac15d_4(M)
       +\frac1{40}\sum_{B\in\BB(M)}\dloc4\notag\\
 &=\frac15b(M)+\sum_{B\in\BB(M)}
 \Bigl(\frac{\wloc3}3+\frac{\wloc4}{20}+\frac{\dloc4}{40}\Bigr).
 \label{eq:rank-four-target}
\end{align}
To obtain \(d(M)<2b(M)/5\) from \eqref{eq:rank-four-target},
it suffices to show, for each \(B\in\BB(M)\), that
\begin{equation}\label{eq:rank-four-local-target}
 20\wloc3+3\wloc4+\frac32\dloc4<12.
\end{equation}
By \eqref{eq:graph-counts},
\[
 0\le\dloc3\le6,\qquad 0\le\dloc4\le4.
\]
If \(\dloc4\le2\), \autoref{lem:local}\(\mathrm{(i),(ii)}\) imply
\(\wloc4\le\dloc4/3\) and
\(\wloc3\le7/12-\dloc4/8\). Hence,
\[
 20\wloc3+3\wloc4+\frac32\dloc4
 \le20\Bigl(\wloc3+\frac{\dloc4}8\Bigr)\le\frac{35}3<12.
\]
For \(\dloc4=3\), we use the parallel-class counts in
\eqref{eq:selected-classes} to sharpen the estimates.
All three multiplicities are positive.
Each \(M/C_{xy}\), with \(xy\in E(G_B)\), has at least
\(\dloc3+4\) nonloops.
Combining the degree-sum bound \(\sum_{x\in X_B}\delta_B(x)\le\dloc3+3\)
in \autoref{lem:local}\(\mathrm{(iii)}\) with
\autoref{lem:rank-two}\(\mathrm{(ii)}\) bounds the two weights:
\[
 \wloc3\le\frac{\dloc3}{3\dloc3+5},\qquad
 \wloc4\le\frac13+\frac3{\dloc3+3}.
\]
Consequently, the left-hand side of \eqref{eq:rank-four-local-target}
is at most
\[
 \frac{11}2+\frac{20\dloc3}{3\dloc3+5}+\frac9{\dloc3+3}
 =12-\frac{(15-\dloc3)(\dloc3+7)}
 {2(3\dloc3+5)(\dloc3+3)}<12.
\]
Finally, if \(\dloc4=4\), all three multiplicities in
\eqref{eq:selected-classes} are at least two.
Each \(M/C_{xy}\), with \(xy\in E(G_B)\), has at least
\(\dloc3+5\) nonloops. Moreover,
\(X_B=B\), so \(\sum_{x\in X_B}\delta_B(x)=2\dloc3\).
By \autoref{lem:rank-two}\(\mathrm{(iii)}\) and
\autoref{lem:local}\(\mathrm{(iii)}\),
\[
 \wloc3\le\frac{\dloc3}{4\dloc3+8},\qquad
 \wloc4\le1-\frac{\dloc3}{2(\dloc3+4)}.
\]
Therefore, the left-hand side of \eqref{eq:rank-four-local-target} is at most
\[
 9+\frac{5\dloc3}{\dloc3+2}-\frac{3\dloc3}{2(\dloc3+4)}
 =12-\frac{(8-\dloc3)(\dloc3+6)}
 {2(\dloc3+2)(\dloc3+4)}<12.
\]
Thus, each summand in \eqref{eq:rank-four-target} is less than \(1/5\),
and \(d(M)<2b(M)/5\).
\end{proof}

For rank at least five, the deletion induction in
\autoref{sec:proofs} produces the correction term
\(\sum_{j=3}^{r(M)+1}(j-r(M))d_j(M)\).
Since \(j-r(M)\le-2\) for \(j\le r(M)-2\),
\begin{equation*}
 \sum_{j=3}^{r(M)+1}(j-r(M))d_j(M)+2d(M)
 \le d_{r(M)-1}(M)+2d_{r(M)}(M)+3d_{r(M)+1}(M).
\end{equation*}
Thus, after the common contribution \(-2d(M)\) is removed,
only the three largest circuit sizes require an upper bound.
Dividing these three coefficients by two gives the combination in the
next proposition.
\begin{proposition}\label{prop:joint}
Let \(M\) be a simple cosimple binary matroid with \(r(M)\ge5\). Then
\[
 \frac12d_{r(M)-1}(M)+d_{r(M)}(M)+\frac32d_{r(M)+1}(M)
 <\frac{2b(M)}{r(M)+1}.
\]
\end{proposition}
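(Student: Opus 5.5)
We follow the pattern of \autoref{prop:rank-four}. Write \(r=r(M)\). We convert the target inequality into a sum over bases of local quantities, and then show that each local summand is strictly below its share of \(2b(M)/(r+1)\). The spanning-circuit term \(d_{r+1}(M)\) and part of \(d_r(M)\) will be absorbed by \autoref{lem:cube-count}. The remaining terms \(d_{r-1}(M)\) and \(d_r(M)\) are rewritten as \(\sum_B\wloc{r-1}/(r-1)\) and \(\sum_B\wloc{r}/r\) using \eqref{eq:weight-total}.

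\emph{Case \(r\ge6\).} Multiply \autoref{lem:cube-count}\(\mathrm{(iii)}\) by \(3/(2(r+1))\) and subtract it from the target. This leaves the sufficient condition
\[
 \frac12d_{r-1}(M)+\frac{8-r}{2(r+1)}d_r(M)+\frac{3}{16r(r+1)}\sum_{B}\dloc{r}<\frac{b(M)}{2(r+1)}.
\]
Multiplying by \(2(r+1)\) and passing to weights, it suffices to show for every basis \(B\), with \(d=\dloc r\), that
\[
 \frac{r+1}{r-1}\wloc{r-1}+\frac{8-r}{r}\wloc r+\frac{3d}{8r}<1 .
\]
We split on \(d\).
\begin{itemize}
\item[] For \(d\le2\), use \autoref{lem:local}\(\mathrm{(i)}\) and \(\mathrm{(ii)}\), namely \(\wloc r\le d/3\) and \(\wloc{r-1}\le 7/12-d/8\). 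At \(r=6\) and \(d=0\) this gives \(\tfrac75\cdot\tfrac7{12}=\tfrac{49}{60}<1\), and the other values of \(r\) and \(d\) should be similar or easier.
\item[] For \(d\ge2\) we cannot use the crude bound \(\wloc{r-1}\le1/3\) together with \(\wloc r\le1\): at \(r=6\) this already gives \(7/15+1/3+3/8>1\). Instead we use the joint bound \autoref{lem:local}\(\mathrm{(iv)}\), \(\wloc r+3\wloc{r-1}\le1+2/d\). When \(r\le8\) the coefficient \((8-r)/r\) is nonnegative, so we write the left side as a combination of \(\wloc r+3\wloc{r-1}\) and a leftover multiple of \(\wloc{r-1}\le1/3\) or of \(\wloc r\le1\). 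When \(r>8\) the \(\wloc r\) term is negative and may be dropped. The term \(3d/(8r)\le3/8\) is the price. The resulting inequalities in \(r\) and \(d\le r\) must then be checked.
\end{itemize}
For moderately large \(r\) this check is routine, since \((r+1)/(3(r-1))\to1/3\) and the bounds have room.

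\emph{Case \(r=5\).} Use \autoref{lem:cube-count}\(\mathrm{(ii)}\) divided by \(4\). The target then reduces to
\[
 \tfrac12d_4(M)<\tfrac{b(M)}{12}-\tfrac1{80}\sum_B\dloc5,
\]
that is, per basis,
\[
 30\,\wloc4+3\,\dloc5<20 .
\]
For \(\dloc5\le2\), \autoref{lem:local}\(\mathrm{(ii)}\) gives at most \(17.5\). For \(\dloc5\ge3\), the bound \(\wloc4\le1/3\) fails as soon as \(\dloc5\ge4\).

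This is the main obstacle. There, as in the rank-four proof, we sharpen the rank-two contraction counts. When \(\dloc5\ge3\), all three parallel classes in \eqref{eq:selected-classes} are nonempty, so \autoref{lem:rank-two}\(\mathrm{(ii)}\) gives \(b(M/C_{xy})\ge3(\dloc4+\dloc5+1)-7\). When \(\dloc5=5\), so that \(X_B=B\), every class has size at least two, and \autoref{lem:rank-two}\(\mathrm{(iii)}\) gives \(b(M/C_{xy})\ge4(\dloc4+6)-12\). These bound \(\wloc4\) by roughly \(\dloc4/(3\dloc4+c)\) and \(\dloc4/(4\dloc4+c')\) respectively, which must be pushed below \((20-3\dloc5)/30\).

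If this is still too weak at \(\dloc5=5\), where we need \(\wloc4<1/6\), we would additionally exploit that \(\dloc4\le\binom52=10\) and use the exact degree structure of \(G_B\). The same refinements, if needed, would also handle any tight small cases at \(r=6,7\).
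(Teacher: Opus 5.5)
Your reduction is the same as the paper's. For \(r\ge6\) your per-basis condition is exactly twice the summand condition in \eqref{eq:high-rank-target}. For \(r=5\), the condition \(30\wloc4+3\dloc5<20\) is the requirement that each summand in \eqref{eq:rank-five-target} be below \(1/2\).

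The checks you leave open for \(r\ge6\) go through without any further refinement. Relax the coefficients to \(3/2\) and \(1/3\), which is legitimate since the weights are nonnegative, and write \(\tfrac32\wloc{r-1}+\tfrac13\wloc r=\tfrac12\wloc{r-1}+\tfrac13\bigl(\wloc r+3\wloc{r-1}\bigr)\). Then \autoref{lem:local}\(\mathrm{(ii),(iv)}\) bound your left side by \(\tfrac12+\tfrac2{3d}+\tfrac{3d}{8r}\le\max\{\tfrac56+\tfrac3{4r},\tfrac78+\tfrac2{3r}\}<1\) for \(2\le d\le r\). At \(\dloc5=4\), your bound \(\wloc4\le\dloc4/(3\dloc4+8)\) is indeed below \(4/15\), because \(\dloc4\le10\).

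The genuine gap is the case \(r=5\), \(\dloc5=5\), where you need \(\wloc4<1/6\). There \autoref{lem:rank-two}\(\mathrm{(iii)}\) with \(t\ge\dloc4+6\) yields only \(b(M/C_{xy})\ge4\dloc4+12\), i.e.\ \(\wloc4\le\dloc4/(4\dloc4+12)\). This is at least \(1/6\) as soon as \(\dloc4\ge6\), and \(\dloc4=10\) does occur, for example at every basis of the rank-five binary projective geometry. Your final paragraph only promises a refinement, so this case remains unproved.

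The missing point is that the bound in \autoref{lem:rank-two}\(\mathrm{(iii)}\) sees only the total \(t\). It is attained at lopsided class sizes such as \((2,2,t-4)\), which rank five excludes. With \(X_B=B\), the selected counts in \eqref{eq:selected-classes} are \(a=\delta_B(x)+1\), \(b=\delta_B(y)+1\), and \(c=3+|E(G_B[B-\{x,y\}])|\). Hence \(2\le a,b\le5\), \(3\le c\le6\), and \(a+b+c=\dloc4+6\). Expanding \((6-a)(6-b)(6-c)\) gives
\[
 b(M/C_{xy})\ge ab+ac+bc=6\dloc4+\frac{abc+(6-a)(6-b)(6-c)}6>6\dloc4,
\]
hence \(\wloc4<1/6\). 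This capped-class-size estimate is how the paper closes the case.
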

\begin{proof}
\emph{Rank five.}
Multiplying the inequality in \autoref{lem:cube-count}\(\mathrm{(ii)}\) by \(3/2\)
and adding \(3d_4(M)\) matches the circuit coefficients in the statement.
Expressing \(3d_4(M)\) through \eqref{eq:weight-total} then yields
\begin{equation}
 6\bigl(\frac12d_4(M)+d_5(M)+\frac32d_6(M)\bigr)\le\frac32b(M)+\sum_{B\in\BB(M)}
 \bigl(\frac34\wloc4+\frac3{40}\dloc5\bigr).
 \label{eq:rank-five-target}
\end{equation}
To bound the right-hand side of \eqref{eq:rank-five-target} strictly
below \(2b(M)\), it suffices to show that each summand is less than \(1/2\).
By \eqref{eq:graph-counts}, \(0\le\dloc4\le10\) and \(0\le\dloc5\le5\).
If \(\dloc4=0\), the summand is at most \(3/8\).
Suppose \(\dloc4>0\).
For \(\dloc5\le3\), we estimate the summand by \autoref{lem:local}\(\mathrm{(ii)}\):
\[
 \frac34\wloc4+\frac3{40}\dloc5
 \le\max\Bigl\{\frac14+\frac3{40}\dloc5,
 \frac7{16}-\frac3{160}\dloc5\Bigr\}
 \le\frac{19}{40}<\frac12.
\]
For \(\dloc5=4\), every \(M/C_{xy}\), with \(xy\in E(G_B)\),
has three nonempty parallel classes and at least \(\dloc4+5\) nonloops
by \eqref{eq:selected-classes}.
Since \(\dloc4\le10\), every four-element fundamental circuit \(C\)
satisfies the following bound by \autoref{lem:rank-two}\(\mathrm{(ii)}\):
\[
 b(M/C)\ge3\dloc4+8\ge\frac{15}4\dloc4+\frac12>\frac{15}4\dloc4.
\]
Thus, \(\wloc4<4/15\), and
\(3\wloc4/4+3\dloc5/40<1/2\).
Finally, if \(\dloc5=5\), the required bound reduces to \(\wloc4<1/6\).
Fix \(xy\in E(G_B)\), and let \(a,b,c\)
count the selected elements in the three parallel classes of
\(M/C_{xy}\), in the order displayed in \eqref{eq:selected-classes}.
Here \(X_B=B\), so the counts in \eqref{eq:selected-classes} satisfy
\(a+b+c=\dloc4+6\), \(2\le a,b\le5\), and
\(3\le c=3+|E(G_B[B-\{x,y\}])|\le6\).
Hence, \(abc>0\) and \((6-a)(6-b)(6-c)\ge0\).
Pairs of selected elements from different parallel classes form distinct bases.
Since \(a+b+c=\dloc4+6\), expanding
\((6-a)(6-b)(6-c)\) gives
\[
 b(M/C_{xy})\ge ab+ac+bc
 =6\dloc4+\frac{abc+(6-a)(6-b)(6-c)}6
 >6\dloc4.
\]
It follows that \(\wloc4<1/6\), and
\(3\wloc4/4+3\dloc5/40<1/2\).
Summing these strict bounds in \eqref{eq:rank-five-target}
establishes the rank-five case.

\emph{Rank at least six.}
To express the required circuit count through local weights,
multiply the bound in \autoref{lem:cube-count}\(\mathrm{(iii)}\) by \(3/2\).
Add \(\frac{r(M)+1}2d_{r(M)-1}(M)+\frac{8-r(M)}2d_{r(M)}(M)\)
to both sides and use \eqref{eq:weight-total} to obtain
\begin{align}
 &(r(M)+1)\Bigl(\frac12d_{r(M)-1}(M)+d_{r(M)}(M)+\frac32d_{r(M)+1}(M)\Bigr)\le\frac32b(M)+\notag\\
 &\sum_{B\in\BB(M)}\biggl(\frac{r(M)+1}{2(r(M)-1)}\wloc{r(M)-1}+\frac{8-r(M)}{2r(M)}\wloc{r(M)}+\frac3{16r(M)}\dloc{r(M)}\biggr).
 \label{eq:high-rank-target}
\end{align}
Since \((r(M)+1)/(2(r(M)-1))\le3/4\) and
\((8-r(M))/(2r(M))\le1/6\), the nonnegativity of the weights
shows that it suffices to prove
\begin{equation}\label{eq:high-rank-local-target}
 \frac34\wloc{r(M)-1}+\frac16\wloc{r(M)}
 +\frac3{16r(M)}\dloc{r(M)}<\frac12.
\end{equation}
If \(\dloc{r(M)}\le1\), \autoref{lem:local}\(\mathrm{(i),(ii)}\)
and \(r(M)\ge6\)
bound the left-hand side of
\eqref{eq:high-rank-local-target} by
\[
 \frac34\Bigl(\frac7{12}-\frac{\dloc{r(M)}}8\Bigr)
 +\frac{\dloc{r(M)}}{18}+\frac{\dloc{r(M)}}{32}
 =\frac7{16}-\frac{\dloc{r(M)}}{144}<\frac12.
\]
In the remaining case \(2\le\dloc{r(M)}\le r(M)\),
\autoref{lem:local}\(\mathrm{(ii)}\) implies
\(\wloc{r(M)-1}\le1/3\).
To obtain the two weight coefficients in
\eqref{eq:high-rank-local-target}, multiply this bound by \(1/4\)
and \autoref{lem:local}\(\mathrm{(iv)}\) by \(1/6\), then add.
Since \(r(M)\ge6\), the left-hand side of
\eqref{eq:high-rank-local-target} is at most
\begin{align*}
 &\frac14+\frac1{3\dloc{r(M)}}+\frac{3\dloc{r(M)}}{16r(M)}\\
 &=\frac7{16}+\frac1{3r(M)}
 -\frac{r(M)-\dloc{r(M)}}{r(M)}
  \Bigl(\frac3{16}-\frac1{3\dloc{r(M)}}\Bigr)\\
 &\le\frac7{16}+\frac1{3r(M)}<\frac12.
\end{align*}
Summing these estimates in \eqref{eq:high-rank-target} bounds its
right-hand side strictly below \(2b(M)\), as required.
\end{proof}

\section{Proofs of the main results}\label{sec:proofs}

The rank and deletion bounds are proved by induction on \(|E(M)|\).
For connected matroids, the joint bounds in \autoref{sec:weights}
handle the cosimple case of the rank induction, while series-pair
reductions handle the remaining case. The deletion induction also
uses the rank bound and a basis-exchange estimate.
Finally, summing the series-extension increments and applying the
deletion bound gives the growth corollary.

\subsection{Series-pair reductions}

We record the circuit descriptions and counting identities for
the two constructions used in the series-pair reductions.

A \emph{one-element series extension} \(M\) of \(N\) at a noncoloop
\(f\) replaces \(f\) by a series pair \(\{x,y\}\). Contracting \(x\)
and relabelling \(y\) as \(f\) recovers \(N\). The circuit set of \(M\) is
\begin{equation}\label{eq:series-circuits}
 \CC(M)=\CC(N\backslash f)\cup
 \{(C-f)\cup\{x,y\}:f\in C\in\CC(N)\}.
\end{equation}
For matroids \(N_1,N_2\) with ground sets meeting only at an
element \(f\) that is neither a loop nor a coloop of either,
their \emph{parallel connection} has circuit set
\begin{equation}\label{eq:parallel-circuits}
 \CC(P(N_1,N_2))\coloneqq\CC(N_1)\cup\CC(N_2)\cup
 \{(C_1\cup C_2)-f:f\in C_i\in\CC(N_i),\ i=1,2\}.
\end{equation}
In particular, a parallel connection of two circuits \(C_1,C_2\),
with \(C_1\cap C_2=\{f\}\), is binary and has exactly the three circuits
\(C_1\), \(C_2\), and \((C_1\cup C_2)-f\).
\begin{lemma}[{\cite[proof of Theorem~3.1, pp.~177--178]{Oxley1983}}]\label{lem:series}
Let \(M\) be a connected simple binary matroid of rank at least
three with a series pair. There are a connected simple binary minor
\(N\) with no coloops, where \(r(N)=r(M)-1\), and an element
\(f\in E(N)\), such that one of the following holds.
\begin{enumerate}
\item[\(\mathrm{(i)}\)] The matroid \(M\) is a one-element series
extension of \(N\) at \(f\), and
\[
 b(M)=b(N)+b(N\backslash f),\qquad d(M)=d(N).
\]
\item[\(\mathrm{(ii)}\)] The matroid \(M\) is the parallel connection
of \(N\) and a triangle \(\{f,x,y\}\), and
\[
 b(M)=2b(N)+b_f(N),\qquad d(M)=d(N)+d_f(N)+1.
\]
\end{enumerate}
\end{lemma}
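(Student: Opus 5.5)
The plan is to fix a series pair \(\{x,y\}\) of \(M\) and split according to whether \(x\) lies in a triangle. By \eqref{eq:binary-orthogonality}, any triangle through \(x\) meets the cocircuit \(\{x,y\}\) in an even number of elements. Such a triangle therefore also contains \(y\), so it has the form \(\{x,y,f\}\). Since \(M\) is simple, the parallel pairs of \(M/x\) come precisely from triangles through \(x\). Hence either \(M/x\) is simple, or \(M\) has a triangle \(\{x,y,f\}\). These two situations give (i) and (ii), respectively.

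\emph{Case (i): \(M/x\) is simple.} Put \(N\coloneqq M/x\) and relabel \(y\) as \(f\). Then \(M\) is the one-element series extension of \(N\) at \(f\), and \(r(N)=r(M)-1\ge2\).
\begin{enumerate}
\item[\(\bullet\)] \emph{Properties of \(N\).} A series extension is connected exactly when the original matroid is, so \(N\) is connected. Being connected with at least two elements, \(N\) has no coloops; in particular \(f\) is a noncoloop.
\item[\(\bullet\)] \emph{Circuit count.} By \eqref{eq:series-circuits}, circuits of \(M\) correspond bijectively to circuits of \(N\): circuits avoiding \(f\) stay unchanged, and \(C\mapsto(C-f)\cup\{x,y\}\) when \(f\in C\). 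Hence \(d(M)=d(N)\).
\item[\(\bullet\)] \emph{Basis count.} Every basis meets the cocircuit \(\{x,y\}\), and the bases split by whether they contain \(x\). Those containing \(x\) correspond to bases of \(M/x=N\). Those avoiding \(x\) are bases of \(M\backslash x\), in which \(y\) is a coloop. They therefore correspond to bases of \(M\backslash x/y\cong N\backslash f\). This gives \(b(M)=b(N)+b(N\backslash f)\).
\end{enumerate}

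\emph{Case (ii): \(M\) has a triangle \(T=\{x,y,f\}\).} Put \(N\coloneqq M\backslash\{x,y\}\). Since \(\{x,y\}\) is a cocircuit, \(r(N)=r(M)-1\). Also \(N\) is simple as a restriction of \(M\).
\begin{enumerate}
\item[\(\bullet\)] \emph{Identify \(M\) with \(P(N,T)\).} Both matroids are binary on the same ground set, so it suffices to compare circuit sets via \eqref{eq:parallel-circuits}.
\begin{enumerate}
\item[\(\circ\)] Circuits of \(M\) avoiding \(\{x,y\}\) are exactly the circuits of \(N\).
\item[\(\circ\)] A circuit \(C\ne T\) meeting \(\{x,y\}\) contains both \(x\) and \(y\), since \(\{x,y\}\) is a series pair. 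By \eqref{eq:binary-symmetric-difference}, \(C\triangle T\) is a disjoint union of circuits of \(N\). Minimality of \(C\) should force \(f\notin C\) and \(C\triangle T=(C-\{x,y\})\cup\{f\}\) to be a single circuit.
\item[\(\circ\)] Conversely, for each circuit \(D\) of \(N\) with \(f\in D\), the set \(D\triangle T\) is dependent. It should be a circuit because any proper circuit inside it must contain \(x,y\), and symmetric difference with \(T\) would then give a proper subset of \(D\) that is dependent.
\end{enumerate}
\item[\(\bullet\)] \emph{Properties of \(N\).} A parallel connection is connected iff both parts are, so \(N\) is connected. It therefore has no coloops, and \(f\) is neither a loop nor a coloop.
\item[\(\bullet\)] \emph{Counts.} The circuits of \(M\) are the circuits of \(N\), the triangle \(T\), and the sets \((D-f)\cup\{x,y\}\) with \(f\in D\). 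This gives \(d(M)=d(N)+1+d_f(N)\). For bases, every basis meets the cocircuit \(\{x,y\}\).
\begin{enumerate}
\item[\(\circ\)] Bases containing exactly one of \(x,y\) are \(B'\cup\{x\}\) or \(B'\cup\{y\}\) with \(B'\in\BB(N)\), contributing \(2b(N)\).
\item[\(\circ\)] Bases containing both \(x\) and \(y\) are \(B'\cup\{x,y\}\) with \(B'\) a basis of \(N/f\), since \(x,y\) span \(f\). These correspond to the bases of \(N\) containing \(f\), contributing \(b_f(N)\).
\end{enumerate}
Together these give \(b(M)=2b(N)+b_f(N)\).
\end{enumerate}

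The main obstacle I expect is the circuit identification \(M=P(N,T)\) in case (ii). The first attempt will be the symmetric-difference argument above with \(T\). If that becomes messy, the fallback is to use that \(\{x,y\}\) in series within the triangle makes \(M\) a \(2\)-sum of \(N\) with \(U_{2,3}\), hence a parallel connection at \(f\). The remaining checks of connectivity and absence of coloops are routine.
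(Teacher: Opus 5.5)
Your proposal is correct. For the counting identities it does essentially what the paper does. The circuit counts come from the descriptions \eqref{eq:series-circuits} and \eqref{eq:parallel-circuits}, and the basis counts come from sorting bases of \(M\) by how they meet the cocircuit \(\{x,y\}\). In case (i) the paper lifts bases of \(N\) and uses \(b(N)-b_f(N)=b(N\backslash f)\). You instead identify \(M\backslash x/y=M\backslash\{x,y\}=N\backslash f\) directly, which is equally valid. The real difference is that the paper does not prove that \(N\) and \(f\) exist; it cites Oxley's proof of Theorem~3.1. You derive their existence from the dichotomy ``\(M/x\) is simple, or \(x\) lies in a triangle \(\{x,y,f\}\)'' together with binary symmetric differences, which makes the lemma self-contained. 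The steps you hedged with ``should'' do go through. If \(f\in C\) then \(T\subsetneq C\), and any component of \(C\triangle T\) avoiding \(f\) would lie in \(C-\{x,y\}\subsetneq C\). One small ordering point needs fixing. Both \eqref{eq:parallel-circuits} and the connectivity criterion for parallel connections presuppose that \(f\) is not a coloop of \(N\), so you should establish this before concluding that \(N\) is connected. It follows quickly: if \(f\) were a coloop of \(N=M\backslash\{x,y\}\), then \(r(T)+r(E(M)-T)=2+(r(M)-2)=r(M)\). Then \(T\) would be a separator of the connected matroid \(M\), forcing \(E(M)=T\) and \(r(M)=2\), a contradiction.
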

The circuit counts in \autoref{lem:series} follow from
\eqref{eq:series-circuits} and \eqref{eq:parallel-circuits}, respectively.
For the basis counts in part \(\mathrm{(i)}\),
write \(\{x,y\}\) for the new series pair.
From a basis \(B\) of \(N\), form \((B-f)\cup\{x,y\}\) if \(f\in B\),
and \(B\cup\{x\}\), \(B\cup\{y\}\) otherwise.
In part \(\mathrm{(ii)}\), the corresponding bases are
\(B\cup\{x\}\), \(B\cup\{y\}\),
and, when \(f\in B\), \((B-f)\cup\{x,y\}\).
In each case, these are all the bases of \(M\), and each occurs once.
The basis counts follow from \(b(N)-b_f(N)=b(N\backslash f)\).
The series-extension identities in part \(\mathrm{(i)}\) also hold
without a connectedness assumption.

For the later case distinctions, suppose that a matroid \(N\) has no
coloops and \(d(N\backslash f)=0\) for some \(f\in E(N)\).
The set \(E(N)-f\) is independent.
Since \(f\) is not a coloop, \(E(N)-f\) is a basis of \(N\),
so \(C(f,E(N)-f)\) is the only circuit of \(N\).
Every element outside this circuit is a coloop. Therefore,
\begin{equation}\label{eq:circuit-case}
 d(N\backslash f)=0\quad\Longrightarrow\quad E(N)\in\CC(N).
\end{equation}

In case \(\mathrm{(ii)}\) of \autoref{lem:series}, we also need a lower
bound for \(b_f(N)\) in terms of \(d_f(N)\).
The following estimate is implicit in the
basis-extension argument in \cite[p.~171, proof of Theorem~1.3]{Oxley1983}.
\begin{lemma}\label{lem:pointed}
If \(M\) is a simple binary matroid and \(e\in E(M)\), then
\[
 b_e(M)\ge d_e(M)+1.
\]
\end{lemma}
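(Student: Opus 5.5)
We aim to inject circuits through \(e\) into bases through \(e\), adding one extra basis that the injection misses. If \(e\) lies in no circuit, then \(e\) is a coloop and \(d_e(M)=0\), while \(b_e(M)=b(M)\ge1\); so we may assume \(d_e(M)\ge1\). The model is the basis-extension argument of \cite[Theorem~1.3]{Oxley1983}. To each circuit \(C\ni e\) we want to assign an independent set \(I_C\) containing \(e\) with \(|I_C|=|C|-1\), say \(I_C=C-f_C\) for a suitable \(f_C\in C-e\). We then extend each \(I_C\) to a basis, choosing the extensions so that distinct circuits receive distinct bases. Finally we exhibit one further basis through \(e\) not in the image.

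A cleaner route uses the weighted count of \autoref{sec:prelim}. We count pairs \((B,C)\) with \(e\in B\), where \(C\) is a fundamental circuit of \(B\) containing \(e\). By \eqref{eq:fundamental-count}, restricted to the choices of omitted element \(f\in C-e\), the circuit \(C\) is fundamental at exactly \((|C|-1)b(M/C)\) bases containing \(e\). Give each such pair weight \(1/((|C|-1)b(M/C))\). Each circuit through \(e\) then has total weight \(1\), so
\[
 d_e(M)=\sum_{B\ni e}\ \sum_{g\in E(M)-B,\ e\in C(g,B)}\frac1{(|C(g,B)|-1)\,b(M/C(g,B))}.
\]
It therefore suffices to show that each local sum is at most \(1\), and strictly less than \(1\) for at least one basis \(B\ni e\). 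The elements \(g\notin B\) with \(e\in C(g,B)\) are exactly the elements of \(C^*(e,B)-e\), by \eqref{eq:exchange}.

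The local bound should come from binary orthogonality and the fact that every contraction is simple enough. Each \(C=C(g,B)\) has \(|C|-1\ge2\) by simplicity. Moreover, \(b(M/C)\) is at least the number of elements \(h\in C^*(e,B)-e\) with \(h\notin\cl(C)\), since each such \(h\) completes \((C-e)\) together with the rest of \(B\) in a controlled way. Concretely, for \(g,h\in C^*(e,B)-e\), the set \((B-e)\cup\{g\}\) and its exchanges give distinct bases extending \(C-x\). The plan is to show that the number of elements of \(C^*(e,B)-e\) is at most \((|C|-1)b(M/C)\) summed appropriately. The simplest sufficient claim is \((|C|-1)b(M/C)\ge|C^*(e,B)|-1\) for each such \(C\), which gives local sum at most \(1\).

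The main obstacle is proving this per-circuit inequality. I would first try to exhibit \(|C^*(e,B)|-1\) distinct bases of \(M\) containing some \(C-x\) with \(x\in C-e\). For each \(h\in C^*(e,B)-e\), take \(x\) to be an element of \(C\cap C(h,B)\) other than \(e\); such an \(x\) exists because the fundamental circuits of \(g\) and \(h\) both contain \(e\) and, by simplicity, are not \(\{e,g\}\). Then perform a basis exchange of \(h\) into \((B-x)\cup\{g\}\). If this fails, fall back to the explicit basis-to-circuit injection of Oxley. Strictness follows by considering the basis \(B\) itself when it has a fundamental circuit of size at least four, or when \(b(M/C)\ge2\). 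If every circuit through \(e\) is a triangle with contraction having a single basis, a direct check of the small structure applies (for example, \(M\) is a triangle, where \(b_e=2=d_e+1\)).
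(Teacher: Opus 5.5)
\textbf{There is a genuine gap: the local bound is never proved, and the sufficient condition you propose for it is false.}

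Your reduction itself is sound. Give each pair $(B,C)$, with $e\in B$ and $C$ a fundamental circuit of $B$ containing $e$, weight $1/((|C|-1)b(M/C))$. Then $d_e(M)$ is the sum over bases $B\ni e$ of the local sums over $g\in C^*(e,B)-e$. So local sums at most $1$, with one strictly smaller, would suffice.

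The condition you single out is $(|C|-1)b(M/C)\ge|C^*(e,B)|-1$ for every $C=C(g,B)$. Take $M=PG(3,2)$, $B=\{e,a,b,c\}$ and $g=e+a+b+c$. Then $C=B\cup\{g\}$ is a spanning circuit, so $M/C$ has rank $0$ and $(|C|-1)b(M/C)=4$. But $C^*(e,B)$ consists of the eight vectors with $e$-coordinate $1$, so $|C^*(e,B)|-1=7$.

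The local sum at this $B$ is $\tfrac14+3\cdot\tfrac1{24}+3\cdot\tfrac1{96}=\tfrac{13}{32}$. So the weighted framework is not refuted, but any proof must balance large and small fundamental circuits globally rather than term by term. \autoref{lem:incomparable-weight} does not help here, because the sets $C(g,B)-g$ all contain $e$ and can be nested.

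The supporting steps also fail:
\begin{itemize}
\item $(B-e)\cup\{g\}$ avoids $e$, so it cannot extend any $C-x$ with $x\ne e$.
\item Two fundamental circuits through $e$ need not share another element of $B$; for example, $\{e,a,e+a\}$ and $\{e,b,e+b\}$ meet only in $e$.
\item The strictness paragraph is only a sketch.
\end{itemize}

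Your fallback, ``use Oxley's injection,'' is exactly the missing content. The paper fixes a basis $B_e\ni e$. For each circuit $C\ni e$ it omits some $x\in(C\cap B_e)-e$ when that set is nonempty. Otherwise it omits some $x\in(C\cap C^*(e,B_e))-e$, which is nonempty by binary orthogonality. It then extends $C-x$ inside $(C-x)\cup(B_e-C)$ to a basis $B_C$ and checks that $B_C\ne B_e$.

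Injectivity is proved in two steps. First, the two kinds of choice are separated by the parity of $|B_C\cap C^*(e,B_e)|$. Within each kind, $C_1\triangle C_2$ lies either in $B_e$ or in $\{x_1,x_2\}$. It is therefore independent, and hence empty because it must be a disjoint union of circuits. The extra $+1$ is $B_e$ itself, which lies outside the image. None of these choices appear in your proposal, in particular the parity separation that makes the map injective.
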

\begin{proof}
Fix a basis \(B_e\) containing \(e\). We construct an injection
from the circuits containing \(e\) to the bases containing \(e\)
other than \(B_e\).
The fundamental cocircuit satisfies
\(C^*(e,B_e)\cap B_e=\{e\}\).
For each circuit \(C\) containing \(e\), we choose the omitted
element \(x\) so that the resulting basis differs from \(B_e\).
If \(C\cap B_e\ne\{e\}\), choose
\(x\in(C\cap B_e)-e\). If \(C\cap B_e=\{e\}\),
\eqref{eq:binary-orthogonality} makes
\((C\cap C^*(e,B_e))-e\) nonempty, and we choose \(x\) in this set.
The set \(C-x\) is independent and spans \(C\).
Hence, \((C-x)\cup(B_e-C)\) spans \(M\). Extend \(C-x\)
within this set to a basis \(B_C\), fixing one such choice for each
\(C\). Then \(e\in B_C\) and \(x\notin B_C\).
If \(C\cap B_e\ne\{e\}\), then \(x\in B_e-B_C\), so \(B_C\ne B_e\).
If \(C\cap B_e=\{e\}\), then \(B_C\) contains the
\(|C|-2\ge1\) elements of \(C-\{e,x\}\) outside \(B_e\),
so again \(B_C\ne B_e\).

Define
\[
 \varphi:\{C\in\CC(M):e\in C\}\to
 \{B\in\BB(M):e\in B,\ B\ne B_e\}
\]
by \(\varphi(C)=B_C\). To prove injectivity, we first
distinguish the two cases by parity. Every element added when extending
\(C-x\) lies in \(B_e-C\), which is disjoint from \(C^*(e,B_e)\).
Thus, \(B_C\cap C^*(e,B_e)=(C-x)\cap C^*(e,B_e)\).
If \(C\cap B_e\ne\{e\}\), then \(x\notin C^*(e,B_e)\).
Otherwise, \(x\in C^*(e,B_e)\).
By \eqref{eq:binary-orthogonality},
\[
 |B_C\cap C^*(e,B_e)|\equiv
 \begin{cases}
 0,&C\cap B_e\ne\{e\},\\
 1,&C\cap B_e=\{e\}
 \end{cases}
  \pmod2.
\]
Let \(C_1,C_2\) be circuits containing \(e\), and suppose
\(B_{C_1}=B_{C_2}\). By the preceding parity relation,
\(C_1\cap B_e=\{e\}\) if and only if \(C_2\cap B_e=\{e\}\).
If \(C_1\cap B_e\ne\{e\}\), then
\(C_1-B_e=B_{C_1}-B_e=B_{C_2}-B_e=C_2-B_e\), so
\(C_1\triangle C_2\subseteq B_e\).
If \(C_1\cap B_e=\{e\}\), then
\(C_i=(B_{C_i}-B_e)\cup\{e,x_i\}\) for \(i=1,2\), where \(x_i\)
is the element chosen for \(C_i\). Hence,
\(C_1\triangle C_2\subseteq\{x_1,x_2\}\).
Since \(B_e\) is a basis and \(M\) is simple,
\(C_1\triangle C_2\) is independent in both cases.
Thus, \eqref{eq:binary-symmetric-difference} implies
\(C_1\triangle C_2=\varnothing\), so \(\varphi\) is injective.
Consequently, \(d_e(M)\le b_e(M)-1\).
\end{proof}

\subsection{Proof of  \autoref{thm:rank}}

\begin{proof}[Proof of \autoref{thm:rank}]
We use strong induction on \(|E(M)|\), including the equality classification.
The hypotheses imply \(r(M)\ge2\) and \(d(M)\ge1\).
For \(r(M)\le3\), \eqref{eq:oxley} gives
\(2b(M)\ge4d(M)\ge(r(M)+1)d(M)\), with equality if and only if
\(M\cong F_7\). It remains to prove strict inequality for \(r(M)\ge4\).

\emph{Disconnected matroids.}
Write \(M=M_1\oplus M_2\) as a direct sum of nonempty matroids.
Each summand has no coloops, so \(r(M_i)\ge2\).
By \eqref{eq:exchange} and the absence of coloops, every element of a
fixed basis can be exchanged for an element outside it, producing
\(r(M_i)\) further distinct bases.
Thus, \(b(M_i)\ge r(M_i)+1\).
If both ranks are at least three, the induction hypothesis,
\eqref{eq:direct-sum-counts}, and \(b(M_i)\ge r(M_i)+1\) give
\[
 \frac{d(M)}{b(M)}
 =\frac{d(M_1)+d(M_2)}{b(M_1)b(M_2)}
 \le\frac4{(r(M_1)+1)(r(M_2)+1)}
 <\frac2{r(M)+1}.
\]
If one summand has rank two, relabel the summands so that
\(r(M_1)=2\). Then \(M_1\cong U_{2,3}\), the rank-two uniform
matroid on three elements.
Adding \(2b(M_2)\ge(r(M_2)+1)d(M_2)\) to
\(b(M_2)\ge r(M_2)+1\) yields
\[
 \frac{d(M)}{b(M)}
 =\frac{1+d(M_2)}{3b(M_2)}
 \le\frac1{r(M_2)+1}
 <\frac2{r(M_2)+3}=\frac2{r(M)+1}.
\]

\emph{Connected cosimple matroids.}
Rank four is covered by \autoref{prop:rank-four}.
Suppose \(r(M)\ge5\).
Since \(M\) is cosimple, every deletion has rank \(r(M)\)
and no coloops.
Consequently, the induction hypothesis applies to every deletion:
\[
 2b(M\backslash e)\ge(r(M)+1)d(M\backslash e),\qquad e\in E(M).
\]
Since each basis occurs in exactly \(|E(M)|-r(M)\) single-element
deletions and each \(j\)-element circuit in exactly \(|E(M)|-j\), we have
\[
 \sum_{e\in E(M)}b(M\backslash e)=(|E(M)|-r(M))b(M),\qquad
 \sum_{e\in E(M)}d(M\backslash e)
 =\sum_{j=3}^{r(M)+1}(|E(M)|-j)d_j(M).
\]
Summing the induction inequalities over \(e\in E(M)\) and applying
these two identities isolates the correction term:
\begin{align*}
 0&\le\sum_{e\in E(M)}
 \bigl(2b(M\backslash e)-(r(M)+1)d(M\backslash e)\bigr)\\
 &=(|E(M)|-r(M))\bigl(2b(M)-(r(M)+1)d(M)\bigr)+(r(M)+1)\sum_{j=3}^{r(M)+1}(j-r(M))d_j(M).
\end{align*}
To close the induction, we bound the last sum by a multiple of
\(2b(M)-(r(M)+1)d(M)\).
For \(3\le j\le r(M)-2\), the coefficient \(j-r(M)\) is at most
\(-2\). Separating these sizes and applying \autoref{prop:joint} gives
\begin{align*}
 \sum_{j=3}^{r(M)+1}(j-r(M))d_j(M)
 &\le -2\sum_{j=3}^{r(M)-2}d_j(M)-d_{r(M)-1}(M)+d_{r(M)+1}(M)\\
 &=2\Bigl(\frac12d_{r(M)-1}(M)+d_{r(M)}(M)
          +\frac32d_{r(M)+1}(M)-d(M)\Bigr)\\
 &<\frac{4b(M)}{r(M)+1}-2d(M).
\end{align*}
Inserting the bound on \(\sum_{j=3}^{r(M)+1}(j-r(M))d_j(M)\)
into the deletion-sum identity bounds its right-hand side strictly:
\[
 0\le\sum_{e\in E(M)}\bigl(2b(M\backslash e)-(r(M)+1)d(M\backslash e)\bigr)<(|E(M)|-r(M)+2)\bigl(2b(M)-(r(M)+1)d(M)\bigr).
\]
Since \(|E(M)|-r(M)+2>0\), the desired strict bound follows.

\emph{Connected matroids with a series pair.}
Since \(M\) has no coloops, failure of cosimplicity means that
\(M\) has a series pair.
Choose \(N,f\) as in \autoref{lem:series}.
The minor \(N\) has fewer elements than \(M\), so the induction
hypothesis applies.
In case \(\mathrm{(i)}\) of \autoref{lem:series}, \(M\) is a
one-element series extension of \(N\) at \(f\).
We first bound \(b(N\backslash f)\) by counting circuits containing \(f\).
For each circuit \(C\) containing \(f\), fix a basis \(B_C\)
of \(N\backslash f\) extending \(C-f\).
Since \(f\) is not a coloop, \(B_C\) is also a basis of \(N\),
and \(C=C(f,B_C)\). Thus, \(C\mapsto B_C\) is injective.
The injection and \eqref{eq:oxley} applied to \(N\backslash f\) imply
\[
 b(N\backslash f)\ge d_f(N),\qquad
 b(N\backslash f)\ge2d(N\backslash f).
\]
Adding twice the first bound to the second eliminates \(d_f(N)\):
\[
 3b(N\backslash f)\ge2d_f(N)+2d(N\backslash f)=2d(N).
\]
The induction hypothesis and \autoref{lem:series}\(\mathrm{(i)}\) now imply
\[
 2b(M)=2b(N)+2b(N\backslash f)
 \ge\Bigl(r(N)+\frac73\Bigr)d(N)
 >(r(M)+1)d(M).
\]

In case \(\mathrm{(ii)}\) of \autoref{lem:series},
\(M=P(N,M|\{f,x,y\})\). Suppose first that
\(d(N\backslash f)\ge1\).
Substituting the two identities in \autoref{lem:series}\(\mathrm{(ii)}\)
and applying \autoref{lem:pointed}, with
\(d_f(N)=d(N)-d(N\backslash f)\), yields
\begin{align*}
 2b(M)-(r(M)+1)d(M)
 &=4b(N)+2b_f(N)-(r(N)+2)(d(N)+d_f(N)+1)\\
 &\ge2\bigl(2b(N)-(r(N)+1)d(N)\bigr)
       +r(N)\bigl(d(N\backslash f)-1\bigr).
\end{align*}
Both terms on the right-hand side are nonnegative.
By induction, the first is positive unless \(N\cong F_7\).
In that exceptional case, \eqref{eq:fano} gives
\(r(N)(d(N\backslash f)-1)=3(7-1)=18>0\).
Therefore, \(2b(M)>(r(M)+1)d(M)\).
If instead \(d(N\backslash f)=0\), then \(E(N)\) is a circuit by
\eqref{eq:circuit-case}. In this case,
\(b(M)=3r(N)+2\) and \(d(M)=3\) by
\autoref{lem:series}\(\mathrm{(ii)}\), so
\[
 2b(M)-(r(M)+1)d(M)=3r(N)-2>0.
\]
The bound is strict in every case with \(r(M)\ge4\),
completing the induction.
\end{proof}

\subsection{Proofs of \autoref{thm:deletion} and  \autoref{cor:growth}}

The rank bound controls all bases, whereas \autoref{thm:deletion}
requires a bound for bases avoiding \(e\).
The following exchange estimate bounds the bases containing \(e\)
in terms of those avoiding \(e\), allowing us to pass from the rank
bound to the deletion bound in the cosimple case.

\begin{lemma}\label{lem:exchange-count}
Let \(M\) be a simple cosimple binary matroid with \(r(M)\ge3\).
For every \(e\in E(M)\),
\[
 2b_e(M)\le(r(M)-1)b(M\backslash e).
\]
\end{lemma}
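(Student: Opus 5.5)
The plan is to double count exchange pairs across \(e\), the same device that underlies the proof of \autoref{lem:pointed}. Let
\[
 P\coloneqq\bigl\{(B,B'):B,B'\in\BB(M),\ e\in B,\ e\notin B',\ |B\triangle B'|=2\bigr\}.
\]
Since \(M\) is cosimple, \(e\) is not a coloop, so \(r(M\backslash e)=r(M)\) and the bases avoiding \(e\) are exactly the bases of \(M\backslash e\).

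First count \(P\) from the side containing \(e\). By \eqref{eq:exchange}, a pair \((B,B')\in P\) has \(B'=(B-e)\cup\{f\}\) with \(f\in C^*(e,B)-e\). Cosimplicity gives \(|C^*(e,B)|\ge3\), so
\[
 |P|=\sum_{B\ni e}\bigl(|C^*(e,B)|-1\bigr)\ge 2b_e(M).
\]
Next count from the side avoiding \(e\). Here \(B=(B'-f)\cup\{e\}\) with \(f\in C(e,B')-e\), so
\[
 |P|=\sum_{B'\not\ni e}\bigl(|C(e,B')|-1\bigr).
\]
Each summand is at most \(r(M)\). It equals \(r(M)\) exactly when \(C(e,B')\) is a spanning circuit, that is, when \(\sigma_{B'}=e\) by \eqref{eq:spanning-bases}.

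Let \(s\) be the number of spanning circuits through \(e\). Each such circuit \(D\) yields exactly \(r(M)\) bases \(D-g\) avoiding \(e\), one for each \(g\in D-e\), and no basis lies in two spanning circuits. Hence
\[
 |P|\le (r(M)-1)b(M\backslash e)+r(M)s .
\]
The crude comparison therefore loses exactly the term \(r(M)s\), and the whole task is to recover it.

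To recover it, I would look at the bases \(D-g\) that \emph{contain} \(e\), for each spanning circuit \(D\ni e\) and each \(g\in D-e\). There are \(r(M)\) of these per \(D\), and the sets are disjoint across different \(D\). The aim is to show that the cocircuit \(K=C^*(e,D-g)\), which is the complement of the hyperplane \(\cl_M(D-\{e,g\})\), contributes extra to the lower count for \(|P|\). Note that \(K\cap D=\{e,g\}\), in accordance with \eqref{eq:binary-orthogonality}. Concretely, I would try to prove \(|K|\ge4\) on average over \(g\), say \(\sum_{g\in D-e}(|C^*(e,D-g)|-3)\ge r(M)\). Summed over the spanning circuits through \(e\), this gives \(|P|\ge 2b_e(M)+r(M)s\), and the two counts then combine to give the statement.

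The main obstacle is this last step, because a cocircuit \(\{e,g,h\}\) of size three is not excluded by simplicity alone. If \(|K|=3\) with \(K=\{e,g,h\}\), then in the column representation \(e+g\), \(e+h\) and \(g+h\) all lie in the hyperplane. The first thing I would try is to show that a triad meeting \(D\) in \(\{e,g\}\) can occur for at most a bounded number of \(g\). For this I would use \eqref{eq:binary-symmetric-difference} on cocircuits: two such triads \(\{e,g,h\}\) and \(\{e,g',h'\}\) have symmetric difference \(\{g,h,g',h'\}\), which is a disjoint union of cocircuits. By cosimplicity it must then itself be a \(4\)-element cocircuit, and its intersection with \(D\) is \(\{g,g'\}\). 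Iterating this should force rigid structure, and I expect to be able to rule it out for most \(g\) once \(r(M)\ge3\).

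If the per-circuit gain falls short, the fallback is a charging scheme. Each excess basis \(B'\) with \(\sigma_{B'}=e\) would be charged to slack elsewhere: either to a basis \(B\ni e\) with \(|C^*(e,B)|\ge4\), or to a basis \(B''\not\ni e\) whose fundamental circuit \(C(e,B'')\) has at most \(r(M)-1\) elements and thus contributes strictly less than the maximal amount to the second count. The \(F_7\) case, where \(2b_e=24<32=2b(M\backslash e)\), shows there is room to spare in rank three, and this is what I would use to calibrate the charging.
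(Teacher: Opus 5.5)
There is a genuine gap, caused by a miscount in your upper bound for $|P|$. A basis $B'$ avoiding $e$ has $|C(e,B')|-1=r(M)$ only when $B'\cup\{e\}$ is a spanning circuit $D$, that is, when $B'=D-e$. So each spanning circuit through $e$ contributes exactly one excess basis, not $r(M)$ of them; the sets $D-g$ with $g\in D-e$ contain $e$, as you yourself note in the next paragraph. The correct bound is $|P|\le(r(M)-1)b(M\backslash e)+s$.

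Worse, the target you then set is false. You aim for $|P|\ge 2b_e(M)+r(M)s$ via $\sum_{g\in D-e}(|C^*(e,D-g)|-3)\ge r(M)$. Take $M(K_4)$ with any edge $e$. Then $b_e(M)=b(M\backslash e)=8$, $s=2$, and $|P|=6\cdot2+2\cdot3=18<22=2b_e(M)+r(M)s$. For a $4$-cycle $D\ni e$, the three cocircuits $C^*(e,D-g)$ have sizes $3,4,3$, so the gain per circuit is $1$, not $3$.

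Your expectation that triads can be ruled out for most $g$ also fails. Take the basis $x_1,\dots,x_{r-1},y$, extra columns $x_i+y$, and $e=x_1+\dots+x_{r-1}+y$, with $D=\{x_1,\dots,x_{r-1},y,e\}$. This is a simple cosimple binary matroid in which $C^*(e,D-g)$ is a triad for every $g\ne y$. Since the lemma holds with equality for $M(K_4)$, there is also no slack for your charging fallback to absorb the spurious extra $(r(M)-1)s$.

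Once the excess is corrected to $s$, your double count is exactly the paper's. It then suffices to find, for each spanning circuit $D\ni e$, a single $x\in D-e$ with $|C^*(e,D-x)|\ge4$. Your triad observation needs one more ingredient to give this.

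Put $B=D-e$. Since $C^*(e,D-x)\cap D=\{e,x\}$, this cocircuit equals $C^*(x,B)$. Suppose every one were a triad $\{e,x,z_x\}$. Simplicity gives some $y\in C(z_x,B)-\{x,z_x\}$, so $z_x\in C^*(y,B)=\{e,y,z_y\}$ by \eqref{eq:exchange}. Because $z_x\notin D$, this forces $z_x=z_y$. Then $C^*(x,B)\triangle C^*(y,B)=\{x,y\}$ contains a cocircuit, contradicting cosimplicity.

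The step your sketch misses is that simplicity forces two triads to share their third element. That turns the symmetric difference into a series pair, rather than the harmless $4$-element cocircuit you obtain when $h\ne h'$.
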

\begin{proof}
We count basis exchanges that remove \(e\). By \eqref{eq:exchange}
and cosimplicity, each basis \(B\) containing \(e\) admits
\(|C^*(e,B)|-1\ge2\) such exchanges. In the reverse direction,
a basis \(B\) avoiding \(e\) admits \(|C(e,B)|-1\) exchanges that
insert \(e\), at most \(r(M)-1\) unless \(C(e,B)\) is spanning.
We compensate for each spanning-circuit exception by an extra exchange
from a distinct basis containing \(e\).
Let \(q\) be the number of spanning circuits containing \(e\).

To obtain the extra exchange, we first show that each such circuit
\(C\) has an element \(x\in C-e\) satisfying
\(|C^*(e,C-x)|\ge4\).
Put \(B=C-e\). If no such \(x\) exists, cosimplicity makes
\(C^*(e,C-x)\) a triad for every \(x\in B\).
Since \(C^*(e,C-x)\) avoids \(C-\{e,x\}\),
\eqref{eq:binary-orthogonality} forces
\(C^*(e,C-x)\cap C=\{e,x\}\).
The cocircuit \(C^*(e,C-x)\) meets \(B\) only in \(x\),
so uniqueness of the fundamental cocircuit gives
\[
 C^*(e,C-x)=C^*(x,B)=\{e,x,z_x\},\qquad z_x\notin C.
\]
Fix \(x\in B\). By \eqref{eq:exchange}, \(x\in C(z_x,B)\).
Since \(M\) is simple, \(C(z_x,B)\) contains an element
\(y\notin\{x,z_x\}\).
By a second application of \eqref{eq:exchange},
\(z_x\in C^*(y,B)=\{e,y,z_y\}\).
As \(z_x\notin C\), we have \(z_x=z_y\).
The dual of \eqref{eq:binary-symmetric-difference} now implies that
\(C^*(x,B)\triangle C^*(y,B)=\{x,y\}\) contains a cocircuit,
contrary to cosimplicity.

For each of the \(q\) spanning circuits \(C\), choose \(x\in C-e\)
with \(|C^*(e,C-x)|\ge4\).
The bases \(C-x\) are distinct by \eqref{eq:spanning-bases}, and each admits
at least three exchanges removing \(e\). Every other basis containing
\(e\) admits at least two.
Since \(e\) is not a coloop, the bases avoiding \(e\) are exactly
the bases of \(M\backslash e\).
Among them, precisely the \(q\) bases \(C-e\) have a spanning
fundamental circuit containing \(e\). Each contributes \(r(M)\)
exchanges, while every other basis contributes at most \(r(M)-1\).
Double-counting the exchanges that remove \(e\) by \eqref{eq:exchange} yields
\begin{align*}
 2b_e(M)+q
 &\le\sum_{B\in\BB(M),\,e\in B} (|C^*(e,B)|-1)\\
 &=\sum_{B\in\BB(M),\,e\notin B} (|C(e,B)|-1)\\
 &\le(r(M)-1)b(M\backslash e)+q.
\end{align*}
Subtracting \(q\) proves the assertion.
\end{proof}

\begin{proof}[Proof of \autoref{thm:deletion}]
We use strong induction on \(|E(M)|\), including the equality classification.
If \(E(M)\) is a circuit, then \(b(M\backslash e)=d(M)=1\).
Every rank-two matroid under consideration is \(U_{2,3}\),
so we may assume that \(E(M)\) is not a circuit and \(r(M)\ge3\).

\emph{Disconnected matroids.}
Write \(M=M_1\oplus M_2\), where both summands are nonempty
and \(e\in E(M_1)\).
Neither summand has coloops, so \(d(M_1),d(M_2)\ge1\).
By the induction hypothesis, \eqref{eq:direct-sum-counts}, and
\eqref{eq:oxley},
\[
 b(M\backslash e)=b(M_1\backslash e)b(M_2)\ge d(M_1)b(M_2)\ge2d(M_1)d(M_2)\ge d(M_1)+d(M_2)=d(M).
\]
Equality in \(2d(M_1)d(M_2)\ge d(M_1)+d(M_2)\) forces
\(d(M_1)=d(M_2)=1\),
and the absence of coloops then makes \(E(M_2)\) its unique circuit.
Thus, \(b(M_2)=|E(M_2)|\ge3>2d(M_2)\), making the second
inequality in the displayed chain strict.

\emph{Connected cosimple matroids.}
For \(M\not\cong F_7\), \autoref{thm:rank} and
\autoref{lem:exchange-count} imply
\[
 d(M)<\frac{2b(M)}{r(M)+1}
 =\frac{2(b_e(M)+b(M\backslash e))}{r(M)+1}
 \le b(M\backslash e).
\]
For \(F_7\), the deletion bound is strict by \eqref{eq:fano}.

\emph{Connected matroids with a series pair.}
The preceding cases are strict, so only connected matroids with a
series pair can attain equality.
Choose \(N,f\) as in \autoref{lem:series}.
Since \(N\) has fewer elements than \(M\), the induction hypothesis
applies to \(N\).
In case \(\mathrm{(i)}\) of \autoref{lem:series}, \(M\) is a
one-element series extension of \(N\) at \(f\), replacing
\(f\) by \(\{x,y\}\).
Restricting the basis description following
\autoref{lem:series}\(\mathrm{(i)}\) to bases avoiding \(e\) yields
\begin{equation}\label{eq:series-deletions}
 b(M\backslash e)=
 \begin{cases}
 b(N\backslash f),&e\in\{x,y\},\\
 2b(N\backslash e)-b_f(N\backslash e),&e\in E(N)-f.
 \end{cases}
\end{equation}
By induction, the first value in \eqref{eq:series-deletions}
is at least \(d(N)=d(M)\).
For \(e\in E(N)-f\), the induction hypothesis and
\(b_f(N\backslash e)\le b(N\backslash e)\) give
\begin{equation}\label{eq:series-deletion-deficit}
 \begin{aligned}
 b(M\backslash e)-d(M)
 &=\bigl(b(N\backslash e)-d(N)\bigr)\\
 &\quad+\bigl(b(N\backslash e)-b_f(N\backslash e)\bigr)\ge0.
 \end{aligned}
\end{equation}

For the equality case, suppose \(b(M\backslash e)=d(M)\).
Since \(E(M)\) is not a circuit, neither is \(E(N)\), by
\eqref{eq:series-circuits}.
If \(e\in\{x,y\}\), equality in \eqref{eq:series-deletions}
is equivalent to \(b(N\backslash f)=d(N)\). By induction,
\(N=P(N|C_1,N|C_2)\), where \(C_1\) is a circuit, \(C_2\) is a
triangle, and \(f\notin C_2\).
If \(e\in E(N)-f\), equality forces both nonnegative terms in
\eqref{eq:series-deletion-deficit} to vanish. By induction,
\(N=P(N|C_1,N|C_2)\),
where \(C_1\) is a circuit, \(C_2\) is a triangle, and \(e\notin C_2\).
The equality \(b_f(N\backslash e)=b(N\backslash e)\) means that
every basis of \(N\backslash e\) contains \(f\), making \(f\) a coloop.
Since \(e\notin C_2\), the triangle \(C_2\) remains a circuit of
\(N\backslash e\). Therefore, \(f\notin C_2\).
In both cases, \eqref{eq:series-circuits} shows that \(M\) is the
parallel connection of \((C_1-f)\cup\{x,y\}\) and \(C_2\), with
\(e\notin C_2\).

Next consider case \(\mathrm{(ii)}\) of \autoref{lem:series}, where
\(M=P(N,M|\{f,x,y\})\).
To isolate the equality cases, we distinguish whether
\(N\backslash f\) has a circuit. The circuit count in that lemma becomes
\begin{equation}\label{eq:triangle-circuits}
 d(M)=2d(N)-d(N\backslash f)+1.
\end{equation}
From the basis description following \autoref{lem:series}\(\mathrm{(ii)}\),
we compute
\begin{equation}\label{eq:triangle-deletions}
 b(M\backslash e)=
 \begin{cases}
 b(N),&e\in\{x,y\},\\
 b(N)+b(N\backslash f),&e=f,\\
 2b(N\backslash e)+b_f(N\backslash e),&e\in E(N)-f.
 \end{cases}
\end{equation}
If \(d(N\backslash f)\ge1\), then \eqref{eq:oxley} and
\eqref{eq:triangle-circuits} imply
\[
 b(N)\ge2d(N)\ge d(M).
\]
Equality \(b(N)=d(M)\) requires \(N\cong F_7\) and
\(d(N\backslash f)=1\), contrary to \eqref{eq:fano}.
Thus, \(b(M\backslash e)>d(M)\) for \(e\in\{x,y,f\}\)
by \eqref{eq:triangle-deletions}.
For \(e\in E(N)-f\), the element \(f\) is a nonloop of
\(N\backslash e\), so \(b_f(N\backslash e)\ge1\).
The induction hypothesis and \eqref{eq:triangle-deletions}
imply \(b(M\backslash e)\ge2d(N)+1>d(M)\).

It remains to consider \(d(N\backslash f)=0\), when \(E(N)\) is a
circuit by \eqref{eq:circuit-case}. Then \(d(M)=3\), and \(b(M\backslash e)\) equals
\(|E(N)|\), \(|E(N)|+1\), or \(3\), according to the three cases in
\eqref{eq:triangle-deletions}.
Thus, equality holds for every \(e\in E(N)-f\),
and also for \(e\in\{x,y\}\) when \(N\) is a triangle.
When \(N\) is a triangle and \(e\in\{x,y\}\), interchanging the
two triangles places \(e\) outside the designated triangle, as required.

Conversely, equality holds when \(E(M)\) is a circuit. A parallel connection
of a circuit \(C_1\) and a triangle \(C_2\) has three circuits
by \eqref{eq:parallel-circuits}.
Taking \(N=M|C_1\) in \eqref{eq:triangle-deletions}, we count
exactly three bases after deleting any element outside \(C_2\).
This completes the equality classification and the induction.
\end{proof}

The growth bound follows by summing the basis increments of the
successive series extensions, each controlled by \autoref{thm:deletion}.

\begin{proof}[Proof of \autoref{cor:growth}]
For \(s=0\), we have \(M=N\) and equality holds. Assume \(s\ge1\).
Take successive matroids \(N_0,\ldots,N_s\), with \(N_0=N\) and \(N_s=M\),
where \(N_i\) is obtained from \(N_{i-1}\) by a one-element series
extension at \(f_i\) for \(1\le i\le s\).
Every \(N_i\) is simple and binary and has no coloops.
By the series-extension identities in \autoref{lem:series}\(\mathrm{(i)}\)
and \autoref{thm:deletion},
\begin{equation}\label{eq:growth-step}
 b(N_i)-b(N_{i-1})=b(N_{i-1}\backslash f_i)
 \ge d(N_{i-1})=d(N).
\end{equation}
Summing \eqref{eq:growth-step} over \(1\le i\le s\) gives
\[
 b(M)-b(N)=\sum_{i=1}^s\bigl(b(N_i)-b(N_{i-1})\bigr)
 \ge s d(N).
\]
If \(N\) is cosimple,
the strict cosimple case of \autoref{thm:deletion} makes
\eqref{eq:growth-step} strict for \(i=1\).

Since each basis increment is at least \(d(N)\), equality
\(b(M)=b(N)+s d(N)\) forces equality at every step:
\(b(N_{i-1}\backslash f_i)=d(N)\) for every \(1\le i\le s\).
In particular, \autoref{thm:deletion} applied to
\(N_{s-1}\) and \(f_s\) shows that \(E(N_{s-1})\) is a circuit, or that
\(N_{s-1}\) is a parallel connection of a circuit and a triangle with
\(f_s\) outside the triangle. The series extension \(M\) has the same
form by \eqref{eq:series-circuits}.

Conversely, if \(E(M)\) is a circuit, \eqref{eq:series-circuits} shows that
every \(E(N_i)\) is a circuit. Hence,
\(b(N_{i-1}\backslash f_i)=d(N)=1\) at every step.
For the parallel-connection case, we trace the extensions backwards
to verify equality at every step.
Suppose \(M=P(M|C_1,M|C_2)\), where \(C_1\) is a circuit
and \(C_2\) is a triangle meeting \(C_1\) at \(f\).
Let \(\{x,y\}\) be the series pair introduced in the last extension.
Contracting \(x\) and relabelling \(y\) as \(f_s\) recovers
the predecessor \(N_{s-1}\).
Since \(N_{s-1}\) is simple, \eqref{eq:series-circuits} shows that
every circuit of \(M\) containing \(x\) or \(y\) has at least four
elements. Therefore, \(\{x,y\}\cap C_2=\varnothing\) and
\(\{x,y\}\subseteq C_1-f\).
Reversing \eqref{eq:series-circuits} and applying
\eqref{eq:parallel-circuits} identifies the contraction as
\(M/x=P((M|C_1)/x,M|C_2)\).
The circuit \(C_1-x\) has at least three elements because \(M/x\) is simple.
Hence, \(N_{s-1}\) is again a parallel connection of a circuit and \(C_2\),
with the extended element \(f_s\), corresponding to \(y\), outside \(C_2\).
Repeating the argument shows that every forward step extends an
element outside \(C_2\) and attains equality in \eqref{eq:growth-step}
by \autoref{thm:deletion}.
\end{proof}

\section{Distinct basis extensions}\label{sec:extensions}

In the weighted count \eqref{eq:weight-total}, each \(j\)-element
circuit contributes total weight \(j\).
We now ask when, for a prescribed finite set \(J\) of circuit sizes,
the pairs \((C,e)\), with \(C\in\CC(M)\), \(|C|\in J\), and
\(e\in C\), can be assigned distinct bases containing \(C-e\).
Such an assignment gives \(\sum_{j\in J}j d_j(M)\le b(M)\).

Each \(C-e\) extends to a basis, but the bases must be chosen
simultaneously, since different pairs may have common extensions.
We distribute weight one from each pair \((C,e)\) equally among
its \(b(M/C)\) extensions.
If each basis receives total weight at most one, every family of
pairs has at least as many available bases as pairs, as required
by Hall's theorem. We first establish this bound at a fixed basis.

\begin{lemma}\label{lem:incomparable-weight}
Let \(M\) be a simple binary matroid, \(B\in\BB(M)\), and
\(e_1,\ldots,e_k\) be distinct elements of \(E(M)-B\).
If the sets \(C(e_i,B)-e_i\) are pairwise incomparable under inclusion,
then
\[
 \sum_{i=1}^k\frac1{b(M/C(e_i,B))}\le1.
\]
\end{lemma}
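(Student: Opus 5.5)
The plan is to show that each term is at most \(1/k\): for every \(i\) we aim to prove \(b(M/C(e_i,B))\ge k\), and then summing \(k\) terms of size at most \(1/k\) gives the bound. This mirrors the rank-one estimate \eqref{eq:rank-one-selected}, where \(b(M/C_x)\ge\dloc{r(M)}\) made \(\wloc{r(M)}\le1\) in \eqref{eq:general-rank-one-weight}. Here pairwise incomparability will replace the explicit column computations.

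Write \(C_i\coloneqq C(e_i,B)\) and \(A_i\coloneqq C_i-e_i\subseteq B\). By \eqref{eq:contraction-bases}, \(B'\mapsto A_i\cup B'\) is a bijection from \(\BB(M/C_i)\) to the bases of \(M\) containing \(A_i\); this was observed just before \eqref{eq:fundamental-count}. So it suffices to exhibit, for fixed \(i\), \(k\) distinct bases of \(M\) containing \(A_i\).

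\begin{itemize}
\item The first is \(B\) itself.
\item For each \(j\ne i\), incomparability gives \(A_j\not\subseteq A_i\), so we can choose \(y_j\in A_j-A_i\). Since \(y_j\in C(e_j,B)\), the exchange relation \eqref{eq:exchange} makes \(B_j\coloneqq(B-y_j)\cup\{e_j\}\) a basis. Because \(y_j\notin A_i\) and \(A_i\subseteq B\), we have \(A_i\subseteq B_j\).
\end{itemize}

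These \(k\) bases are pairwise distinct. Each \(B_j\) differs from \(B\) since \(e_j\notin B\). For \(j\ne j'\), \(B_j\) contains \(e_j\) as its unique element outside \(B\), and \(B_{j'}\) contains \(e_{j'}\ne e_j\) instead. Hence \(b(M/C_i)\ge k\) for every \(i\), and the sum is at most \(k\cdot(1/k)=1\).

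The only step needing care is the distinctness and containment check. Note that only incomparability in the direction \(A_j\not\subseteq A_i\) is used. The binary and simple hypotheses appear unnecessary for this argument; I would double-check that nothing subtle hides in the case \(k=1\), which is trivial since \(b(M/C_1)\ge1\).
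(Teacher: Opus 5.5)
Your proposal is correct and matches the paper's proof: for each \(i\) you exhibit \(B\) together with the \(k-1\) single exchanges \((B-y_j)\cup\{e_j\}\) as distinct bases containing \(C(e_i,B)-e_i\), where incomparability lets you choose \(y_j\) outside \(C(e_i,B)\). This gives \(b(M/C(e_i,B))\ge k\), and your remark that neither simplicity nor the binary hypothesis is used also holds for the paper's argument.
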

\begin{proof}
If \(k=0\), the sum is zero. Suppose \(k\ge1\), and fix \(i\).
We construct \(k\) distinct bases of \(M/C(e_i,B)\).
For each \(j\ne i\), incomparability gives an element
\(x_j\in(C(e_j,B)-e_j)-C(e_i,B)\).
Since \(x_j\notin C(e_i,B)\), exchanging \(x_j\) for \(e_j\)
preserves \(C(e_i,B)-e_i\). The set \((B-x_j)\cup\{e_j\}\)
is a basis by \eqref{eq:exchange}. Thus, \eqref{eq:contraction-bases}
identifies the following bases
\[
 B-C(e_i,B),\qquad
 (B-C(e_i,B)-x_j)\cup\{e_j\},\qquad j\ne i
\]
of \(M/C(e_i,B)\). The displayed bases are distinct because their
intersections with \(E(M)-B\) are \(\varnothing\) and the distinct
singletons \(\{e_j\}\). Hence, \(b(M/C(e_i,B))\ge k\) for every \(i\),
and summing the reciprocals yields
\[
 \sum_{i=1}^k\frac1{b(M/C(e_i,B))}\le1.\qedhere
\]
\end{proof}

Hall's theorem \cite{Hall1935} states that a finite indexed family
of finite sets has a system of distinct representatives if and only
if the union of every subfamily has cardinality at least the number
of its members.
The circuit-size condition below ensures the incomparability
needed to apply \autoref{lem:incomparable-weight}.

\begin{theorem}\label{thm:distinct-extensions}
Let \(M\) be a simple binary matroid, and \(J\) be a finite set
of positive integers. Suppose that \(M\) has no
\((q-p+2)\)-element circuit whenever \(p,q\in J\) and \(p<q\).
Then there exists an injective map
\[
 \phi:\{(C,e):C\in\CC(M),\ |C|\in J,\ e\in C\}\to\BB(M)
\]
such that \(C-e\subseteq\phi(C,e)\) for every pair \((C,e)\).
In particular,
\[
 \sum_{j\in J}j d_j(M)\le b(M).
\]
\end{theorem}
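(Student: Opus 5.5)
The plan is to apply Hall's theorem to the family indexed by the pairs \((C,e)\) with \(C\in\CC(M)\), \(|C|\in J\) and \(e\in C\). The set assigned to \((C,e)\) is
\[
 A(C,e)\coloneqq\{B\in\BB(M):C-e\subseteq B\}.
\]
By the discussion before \eqref{eq:fundamental-count}, \(|A(C,e)|=b(M/C)\). Moreover, every \(B\in A(C,e)\) avoids \(e\) and satisfies \(C(e,B)=C\).

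To verify Hall's condition, I will spread weight one from each pair equally over its available bases, giving weight \(1/b(M/C)\) to each pair \((B,(C,e))\) with \(B\in A(C,e)\). For any subfamily \(S\) of pairs, \(|S|\) equals the total weight sent by \(S\). All of this weight lands on bases in \(\bigcup_{(C,e)\in S}A(C,e)\). It therefore suffices to show that each basis \(B\) receives total weight at most one from all pairs.

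The pairs sending weight to \(B\) are exactly \((C(e,B),e)\) for \(e\in E(M)-B\) with \(|C(e,B)|\in J\), and each sends \(1/b(M/C(e,B))\). So the total received is at most the sum in \autoref{lem:incomparable-weight}, taken over these elements \(e_1,\dots,e_k\). The bound is then immediate once the sets \(C(e_i,B)-e_i\) are shown to be pairwise incomparable.

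The main step is proving this incomparability from the circuit-size hypothesis. Suppose \(C_i-e_i\subseteq C_j-e_j\) with \(i\ne j\), where \(C_i=C(e_i,B)\), and write \(p=|C_i|\) and \(q=|C_j|\), both in \(J\).

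\emph{Equal sizes.} If \(p=q\), the two sets coincide. Then \(C_i\triangle C_j=\{e_i,e_j\}\) is a nonempty disjoint union of circuits by \eqref{eq:binary-symmetric-difference}. This is impossible, since \(M\) is simple. So \(p<q\).

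\emph{Different sizes.} Set
\[
 D\coloneqq C_i\triangle C_j=\bigl((C_j-e_j)-(C_i-e_i)\bigr)\cup\{e_i,e_j\},
\]
which has \(q-p+2\) elements. By \eqref{eq:binary-symmetric-difference}, \(D\) is a disjoint union of circuits, and I claim it is a single circuit. Let \(D'\) be any circuit contained in \(D\).
\begin{enumerate}
\item[\(\mathrm{(a)}\)] \(D'\) is not contained in \(B\), because \(B\) is independent. Hence \(D'\) contains \(e_i\) or \(e_j\).
\item[\(\mathrm{(b)}\)] If \(D'\) contains only one of them, it lies in \(B\cup\{e_i\}\) or \(B\cup\{e_j\}\). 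By uniqueness of fundamental circuits it equals \(C_i\) or \(C_j\).
\item[\(\mathrm{(c)}\)] Neither \(C_i\) nor \(C_j\) lies in \(D\): the nonempty set \(C_i-e_i\) is removed from \(D\) and is contained in both circuits.
\end{enumerate}
Thus every circuit inside \(D\) contains both \(e_i\) and \(e_j\). Disjointness then forces the decomposition to consist of one circuit, namely \(D\) itself. This yields a circuit of size \(q-p+2\) with \(p<q\) in \(J\), contradicting the hypothesis.

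Hall's theorem then gives the injective map \(\phi\). Counting its domain gives \(\sum_{j\in J}jd_j(M)\le b(M)\), since each \(j\)-element circuit contributes \(j\) pairs. Sizes \(j\le2\) contribute nothing because \(M\) is simple, so such elements of \(J\) need no separate treatment.
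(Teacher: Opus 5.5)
Your proposal is correct and follows the paper's proof: you use the same Hall's-theorem argument with weight $1/b(M/C)$ spread over the extensions of each $C-e$, reduced via \autoref{lem:incomparable-weight} to pairwise incomparability of the sets $C(e,B)-e$. That incomparability is refuted, as in the paper, by showing that $C(e_i,B)\triangle C(e_j,B)$ is a circuit of the excluded size $q-p+2$. The only local difference is that the paper identifies this symmetric difference directly as the fundamental circuit $C(e_j,B')$ for the exchanged basis $B'=(B-x)\cup\{e_i\}$ via column expansions, and deduces $p<q$ from the fact that every circuit has at least three elements. You instead obtain it from \eqref{eq:binary-symmetric-difference} together with uniqueness of fundamental circuits, and treat $p=q$ separately, which works equally well.
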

\begin{proof}
Fix \(B\in\BB(M)\). To apply \autoref{lem:incomparable-weight}
at \(B\), we first show that the sets \(C(e,B)-e\),
where \(e\in E(M)-B\) and \(|C(e,B)|\in J\), are pairwise
incomparable under inclusion.
Suppose, to the contrary, that distinct such elements \(e,f\) satisfy
\(C(e,B)-e\subseteq C(f,B)-f\).
To express \(f\) in a basis containing \(e\), choose
\(x\in C(e,B)-e\) and put \(B'=(B-x)\cup\{e\}\),
a basis by \eqref{eq:exchange}.
The binary column relations \eqref{eq:circuit-column-sum} give
\[
 f=e+\sum_{y\in C(f,B)-(C(e,B)\cup\{f\})}y.
\]
Since \(x\in C(e,B)\), all vectors on the right belong to \(B'\).
The unique expansion in the basis \(B'\) determines
\(C(f,B')=C(e,B)\triangle C(f,B)\).
Since every circuit of \(M\) has at least three elements,
\[
 3\le |C(f,B')|=|C(f,B)|-|C(e,B)|+2.
\]
Hence, \(|C(e,B)|<|C(f,B)|\), and \(C(f,B')\) has a size excluded
by the hypothesis, a contradiction.

To verify Hall's condition, let \(\mathcal A\) be any set of pairs
\((C,e)\) as in the statement, and set
\[
 \BB_{\mathcal A}=
 \bigl\{B\in\BB(M):C-e\subseteq B
 \text{ for some }(C,e)\in\mathcal A\bigr\}.
\]
By \eqref{eq:contraction-bases}, summing \(1/b(M/C)\)
over all bases containing \(C-e\) gives \(1\) for each
\((C,e)\in\mathcal A\).
If \(C-e\subseteq B\), then \(e\notin B\) and \(C=C(e,B)\).
Therefore, distinct pairs contributing to a fixed basis have distinct
elements \(e\). By the incomparability proved above and
\autoref{lem:incomparable-weight}, each basis receives total weight
at most one. Counting the total weight by pairs and by bases gives
\[
 |\mathcal A|
 =\sum_{B\in\BB_{\mathcal A}}
   \sum_{(C,e)\in\mathcal A,\,C-e\subseteq B}
   \frac1{b(M/C)}
 \le|\BB_{\mathcal A}|.
\]
By Hall's theorem, there exist pairwise distinct bases \(B_{C,e}\)
satisfying \(C-e\subseteq B_{C,e}\), one for each pair \((C,e)\)
in the statement. Define \(\phi(C,e):=B_{C,e}\), which is the required injective map.
Since there are \(\sum_{j\in J}j d_j(M)\) such pairs, injectivity
gives the numerical bound.
\end{proof}

For a single circuit size \(j\), the numerical bound also follows
from Oxley's spanning-circuit bound recalled in \autoref{sec:weights}
by induction on \(|E(M)|\), allowing coloops.
The empty matroid has no circuits. Deleting a coloop preserves both
counts, so it remains to consider nonempty matroids with no coloops.
For \(j>r(M)+1\), we have \(d_j(M)=0\), while \(j=r(M)+1\)
is covered by the spanning-circuit bound.
For \(j\le r(M)\), summing the inductive bounds over all deletions gives
\[
 (|E(M)|-j)j d_j(M)=\sum_{e\in E(M)}j d_j(M\backslash e)\le\sum_{e\in E(M)}b(M\backslash e)=(|E(M)|-r(M))b(M).
\]
Since \(|E(M)|-j\ge|E(M)|-r(M)>0\), the desired bound follows.
In contrast, the joint bound can fail without the circuit-size exclusion:
\[
 3d_3(F_7)+4d_4(F_7)=49>28=b(F_7).
\]
For two circuit sizes, taking the contrapositive of the numerical
bound in \autoref{thm:distinct-extensions} gives the following corollary.

\begin{corollary}\label{cor:forced-size}
Let \(M\) be a simple binary matroid, and \(p<q\) be positive
integers. If \(p d_p(M)+q d_q(M)>b(M)\), then \(M\) has a
\((q-p+2)\)-element circuit.
\end{corollary}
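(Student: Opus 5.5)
This follows from \autoref{thm:distinct-extensions} applied with \(J=\{p,q\}\), taking the contrapositive. The one point that needs checking is that the theorem's hypothesis reduces, for a two-element \(J\), to the absence of a single circuit size.

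\emph{Step 1: reduce the hypothesis.} Since \(p<q\) are distinct, \(J=\{p,q\}\) has exactly two elements. The only ordered pair \(p'<q'\) with \(p',q'\in J\) is \((p,q)\) itself. So the hypothesis of \autoref{thm:distinct-extensions} says exactly that \(M\) has no \((q-p+2)\)-element circuit.

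\emph{Step 2: apply the theorem.} Suppose \(M\) has no \((q-p+2)\)-element circuit. Then \(M\) is a simple binary matroid satisfying the hypothesis of \autoref{thm:distinct-extensions} with this \(J\). The numerical conclusion of that theorem gives
\[
 p\,d_p(M)+q\,d_q(M)=\sum_{j\in J}j\,d_j(M)\le b(M).
\]

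\emph{Step 3: take the contrapositive.} If \(p\,d_p(M)+q\,d_q(M)>b(M)\), the conclusion of Step 2 fails. Hence its assumption fails, so \(M\) must have a \((q-p+2)\)-element circuit.

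No further matroid argument is needed, since the theorem already allows arbitrary positive integers in \(J\). I do not expect any real obstacle; the only potential subtlety is the bookkeeping in Step 1.
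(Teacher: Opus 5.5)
Your proposal is correct and matches the paper's argument: the paper obtains this corollary by taking the contrapositive of the numerical bound in \autoref{thm:distinct-extensions} with \(J=\{p,q\}\). Your Step 1 makes explicit the (trivial) point that, for a two-element \(J\), the exclusion hypothesis reduces to the absence of \((q-p+2)\)-element circuits.
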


The \emph{girth} of a matroid with at least one circuit is the
minimum size of a circuit.
The girth provides a sufficient condition for applying
\autoref{thm:distinct-extensions} to all circuit sizes in an interval.

\begin{corollary}\label{cor:girth-interval}
Let \(M\) be a simple binary matroid with at least one circuit
and girth \(g\). If \(m\le n\) are positive integers with
\(n-m\le g-3\), then
\[
 \sum_{j=m}^n j d_j(M)\le b(M).
\]
In particular, equality holds when \(E(M)\) is a circuit and
\(m=n=|E(M)|\).
\end{corollary}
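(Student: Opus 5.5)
We apply \autoref{thm:distinct-extensions} with \(J=\{m,m+1,\ldots,n\}\); this choice gives exactly \(\sum_{j\in J}j d_j(M)=\sum_{j=m}^n j d_j(M)\). The main task is to verify the circuit-size exclusion. For this, take \(p,q\in J\) with \(p<q\). Then \(1\le q-p\le n-m\le g-3\), so
\[
 3\le q-p+2\le g-1<g.
\]
No circuit of \(M\) has fewer than \(g\) elements, by the definition of girth. Hence \(M\) has no \((q-p+2)\)-element circuit, and the hypothesis of \autoref{thm:distinct-extensions} holds. If \(m=n\), the set \(J\) is a singleton and the condition is vacuous. In either case, the numerical bound of \autoref{thm:distinct-extensions} gives the displayed inequality.

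The equality statement is a direct check. Suppose \(E(M)\) is a circuit with \(|E(M)|=k\). Since \(M\) is simple, \(k\ge3\). Take \(m=n=k\); then \(n-m=0\le g-3\), because \(g=k\ge3\). The left-hand side is \(k d_k(M)=k\), since \(E(M)\) is the unique circuit. The bases of \(M\) are the sets \(E(M)-e\), so \(b(M)=k\), and equality holds.

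I expect no real obstacle in this argument. The only point needing care is that \(q-p+2\) is strictly less than \(g\), and this is exactly what the hypothesis \(n-m\le g-3\) guarantees.
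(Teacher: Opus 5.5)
Your proof is correct and follows the paper's argument essentially verbatim: you apply \autoref{thm:distinct-extensions} with \(J=\{m,\ldots,n\}\), use \(q-p+2\le n-m+2<g\) to verify the circuit-size exclusion, and check the equality case directly from \(b(M)=d_{|E(M)|}(M)\cdot|E(M)|=|E(M)|\) when \(E(M)\) is a circuit.
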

\begin{proof}
For \(m\le p<q\le n\), we have
\[
 q-p+2\le n-m+2<g.
\]
Therefore, \autoref{thm:distinct-extensions} applies with
\(J=\{m,\ldots,n\}\) and gives the bound.
If \(E(M)\) is a circuit and \(m=n=|E(M)|\), then both
sides equal \(|E(M)|\).
\end{proof}

\section*{Acknowledgements}
This work is supported by the Guangdong Basic and Applied Basic Research Foundation (Grant No. 2026A1515012237).
\section*{Declaration of AI use}
The author developed the original results and proofs. OpenAI's Codex assisted with proofreading and language polishing. The author takes full responsibility for the content of this paper.

\end{document}